\newtheorem{thm}{Theorem}[section]
\newtheorem{lemma}[thm]{Lemma}
\newtheorem{prop}[thm]{Proposition}
\newtheorem{cor}[thm]{Corollary}
\theoremstyle{definition}
\newtheorem{defi}[thm]{Definition}
\newtheorem{example}[thm]{Example}
\theoremstyle{remark}
\newtheorem{remark}[thm]{Remark}
\numberwithin{equation}{section}
\newcommand{\<}{\langle}
\renewcommand{\>}{\rangle}
\newcommand{\st}{\; | \;}             
\newcommand{\PP}{\mathbb{P}}      
\newcommand{\OO}{\mathcal{O}}     
\newcommand{\F}{\mathcal{F}}      
\newcommand{\D}{\mathcal{D}}      
\newcommand{\A}{\mathcal{A}}      
\newcommand{\B}{\mathcal{B}}      
\newcommand{\C}{\mathcal{C}}      
\newcommand{\Gammahat}{{\widehat{\Gamma}}}
\newcommand{\Hall}{\mathcal{H}}
\newcommand{\hh}{{\bf{h}}}
\newcommand{\Om}{\Omega}
\renewcommand{\th}{\theta}
\newcommand{\Z}{\mathbb{Z}}       
\newcommand{\K}{\mathcal{K}}
\newcommand{\N}{\mathbb{N}}
\DeclareMathOperator{\Ind}{Ind}
\DeclareMathOperator{\Hom}{Hom}
\DeclareMathOperator{\RHom}{RHom}
\DeclareMathOperator{\Ext}{Ext}
\DeclareMathOperator{\id}{id}
\DeclareMathOperator{\Path}{Path}
\DeclareMathOperator{\Rep}{Rep}
\begin{document}
\title{Categorical Construction of $A,D,E$ Root Systems}
\author{A. Kirillov Jr.}
 \address{Department of Mathematics, SUNY at Stony Brook, 
            Stony Brook, NY 11794, USA}
    \email{kirillov@math.sunysb.edu}
    \urladdr{http://www.math.sunysb.edu/\textasciitilde kirillov/} 
\author{J. Thind}
 \address{Department of Mathematics and Statistics, Queen's University, 
            Kingston, ON, Canada}
    \email{jthind@mast.queensu.ca}
\maketitle

\begin{abstract}
Let $\Gamma$ be a Dynkin diagram of type $A,D,E$ and let $R$ denote the corresponding root system. In this paper we give a categorical construction of $R$ from $\Gamma$. Instead of choosing an orientation of $\Gamma$ and studying representations of the associated quiver, we study representations of a canonical quiver $\Gammahat$ associated to $\Gamma$. This construction is very closely related to the preprojective algebra of $\Gamma$. In particular, the construction gives a certain periodicity result about the preprojective algebra.
\end{abstract}

\section{Introduction}\label{s:intro}
In the 1970's Gabriel showed that when the underlying graph of a quiver $\overrightarrow{\Gamma}$ is a Dynkin diagram of type $A,D,E$ the set of isomorphism classes of indecomposable representations are in bijection with the set of positive roots of the corresponding root system (see \cite{gabriel}). Moreover, one can obtain an explicit description of the inner product and the root lattice. Ringel then showed that using this category one could construct the positive part of the corresponding Lie algebra (see \cite{ringel}). To obtain all roots and the whole Lie algebra one must consider isomorphism classes of objects in a related category; the ``2-periodic derived category"  $\D^{b} (\Rep (\overrightarrow{\Gamma}) )/ T^{2}$, where $T$ is the translation functor (see \cite{px}).  In this approach two different choices of orientation of the same graph give rise to different Abelian categories, which are not equivalent, but instead derived equivalent. The relation between different categories is given by the ``BGP reflection functors".

The drawback of the quiver approach is that it requires a choice of orientation of the Dynkin diagram, making the constructions non-canonical. Similarly, the standard construction of a root system requires a choice of simple roots. One would like to find a construction which does not require these choices.

Another approach, which is independent of any choice of orientation, was suggested by Ocneanu \cite{ocneanu}, in the setting of quantum subgroups of $SU(2)$. His idea was to give a purely combinatorial construction by studying ``essential paths" in the quiver $\Gammahat = \Gamma \times \Z_{h}$, which requires no choice of orientation.

In the case of affine Dynkin diagrams the McKay correspondence provides a tool for avoiding choosing orientations. The classical McKay correspondence identifies affine Dynkin diagrams of type $A,D,E$ and finite subgroups $G\subset SU(2)$. In 2006, Kirillov Jr. studied a geometric approach to McKay correspondence using $\bar{G}$-equivariant coherent sheaves on $\PP^{1}$, where $\bar{G} = G/ \pm I$ (see \cite{kirillov}).
In particular, it was shown that indecomposable objects in the category $\D^{b}_{\bar{G}}(\PP^{1})/T^{2}$ are in bijection with the roots of the corresponding affine root system, that the inner product and root lattice admit an explicit description in terms of this category, and that although there is no natural choice of simple roots, there is a canonical Coxeter element in the Weyl group. This gives a ``categorical construction" of the corresponding root system. Here ``categorical construction" means that roots are realized as classes of indecomposable objects in a certain category, and the inner product and root lattice admit explicit description in terms of this category.

Motivated by the above constructions, the authors conjectured an analogous construction to the affine case, in the case when $\Gamma$ is a finite Dynkin graph in \cite{kt}. (For the reader's convenience that conjecture is stated at the end of this section.)

In this paper, that conjecture is established. Given a Dynkin diagram $\Gamma$ we study a translation quiver $\Gammahat \subset \Gamma \times \Z$, and a triangulated subcategory $\D \subset \D (\Gammahat)$ of the corresponding derived category. Given any choice of orientation $\Om$, equivalences $\D \to \D^{b} (\Rep (\Gamma , \Om))$ are constructed and shown to be compatible with the reflection functors.

Moreover, this construction is closely related to the preprojective algebra of $\Gamma$. We begin by giving a ``graphical" description of the Koszul complex of the preprojective algebra. In this setup elements of degree k in the Koszul complex are visualized as paths in the quiver $\Gammahat$ with k ``jumps". This description is then used to construct the indecomposable objects in the category $\D$. We show that the category $\D$ has $\Gammahat ^{op}$ as its Auslander-Reiten quiver, and use this to relate $\D$ with the mesh category as described in \cite{bbk} and \cite{happel}. This leads to a periodicity result about the preprojective algebra (see Theorem~\ref{t:periodic}). Finally, we study the quotient category $\C= \D / T^{2}$, where $T$ is the translation functor, and use this to establish the conjecture stated in \cite{kt}. The Auslander-Reiten quiver of $\C$ is denoted by $\Gammahat_{cyc}$, and $\Gammahat_{cyc} \subset \Gamma \times \Z_{2h}$, where $h$ is the Coxeter number of $\Gamma$.

The main result of this paper (and statement of the conjecture in \cite{kt}) is summarized in the following Theorem.

\begin{thm}\label{t:main1}
    Given a simply-laced Dynkin diagram $\Gamma$ with Coxeter number $h$
    there exists a triangulated category $\C$ with an exact  functor
    $\C\to\C\colon \F\mapsto\F(2)$ \textup{(}``twist''\textup{)} with the
    following properties:

    \begin{enumerate}
        \item The category $\C$ is 2-periodic: $T^2=\id$
        \item For any $\F\in \C$, there is a canonical functorial isomorphism
            $\F(2h)=\F$, where $h$ is the Coxeter number of $\Gamma$.
        \item Let $\K$ be the Grothendieck group of $\C$. The corresponding root system $R$ is identified with the set $\Ind\subset \K$  of all indecomposable classes. 
        \item The map $C\colon[\F]\mapsto[\F(-2)]$ is a Coxeter
            element for  this root system.
        \item Set $\< X,Y \>_{\C} = \dim \RHom (X,Y) = \dim \Hom (X,Y) - \dim \Ext^{1} (X,Y)$ (the ``Euler form"), then the inner
            product is given by $(X,Y) = \< X,Y \>_{\C} + \< Y,X \>_{\C}$.
        \item There is a natural bijection $\Phi : \Ind \to \Gammahat_{cyc}$, between indecomposable objects and vertices in $\Gammahat_{cyc}$.
            Under this bijection, the Coxeter element $C$ defined above is
            identified with the map $\tau :(i,n)\mapsto (i,n+2)$.
            Denote the indecomposable object corresponding 
            to $q=(i,n)$ by $X_{q}$.
        \item The category $\C$ has ``Serre Duality":  $$\Hom (X,Y) = (\Ext^{1} (Y, X(-2)))^{\ast}$$
        There is also an identification
        $$\Hom(X_{q} , X_{q^{\prime}} ) =\Path(q^{\prime}, q )/ J$$
           where $\Path$ is the vector space generated by paths in $\Gammahat_{cyc}$ and $J$ is
            some explicitly described subspace. Thus the form $\< \cdot , \cdot \>$ is determined by paths in $\Gammahat_{cyc}$.     
            
           \item For every height function $\hh$ (see Section~\ref{s:gammahat} for definition), there is a derived functor $R\rho_{\hh} : \C \to
            \D^b(\Gamma , \Om_{\hh})/T^2$ which is an equivalence of
            triangulated categories.
    \end{enumerate}
\end{thm}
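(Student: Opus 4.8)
The plan is to realize $\C$ concretely as the $2$-periodic quotient $\D/T^2$ of a triangulated category $\D$ built from the translation quiver $\Gammahat$, and then to prove the eight properties in two waves: first the intrinsic homological statements (1), (2), (6), (7), which I would read off from the mesh-category and preprojective-algebra description of $\D$; and then the root-theoretic statements (3), (4), (5), which I would obtain by transporting structure along the orientation-dependent equivalences of (8). The key preliminary input, established earlier in the paper, is that $\Gammahat^{op}$ is the Auslander-Reiten quiver of $\D$, with $\Hom$-spaces computed by paths modulo the mesh relations; essentially everything else is extracted from this.

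First I would dispatch the intrinsic properties. Property (1), $T^2 = \id$, is immediate from the definition $\C = \D/T^2$. For (6), the bijection $\Phi$ assigns to each indecomposable its vertex in the AR-quiver; passing from $\D$ to $\C = \D/T^2$ folds $\Gammahat$ into $\Gammahat_{cyc} \subset \Gamma \times \Z_{2h}$, and the twist by $-2$ is by construction the translation $(i,n) \mapsto (i,n+2)$, so $C$ is identified with this map. Property (7), Serre duality, I would obtain by identifying the Serre functor of $\C$ as the composite of the twist $(-2)$ with the suspension, which yields $\Hom(X,Y) \cong \Ext^{1}(Y, X(-2))^{\ast}$; the path description $\Hom(X_q, X_{q'}) = \Path(q', q)/J$ then follows from the mesh relations, with $J$ the subspace they generate. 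Property (2), $\F(2h) = \F$, is the periodicity result (Theorem~\ref{t:periodic}): the twist by $-2$ is the Coxeter transformation, of order $h$, so the $2h$-fold twist is the identity on the AR-quiver --- equivalently $\Gammahat_{cyc}$ genuinely lives over $\Z_{2h}$ --- and I would upgrade this combinatorial periodicity to a canonical functorial isomorphism using the mesh structure.

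Next I would treat the root-theoretic properties by fixing a height function $\hh$ and invoking the equivalence $R\rho_{\hh} : \C \to \D^b(\Gamma, \Om_\hh)/T^2$ of (8). On the right-hand side the answers are classical: by Gabriel's theorem the indecomposables of $\Rep(\Gamma, \Om_\hh)$ biject with the positive roots, and their suspensions $M[1]$, with $[M[1]] = -[M]$, give the negatives, so the indecomposable classes in $\K$ are exactly the roots of $R$, giving (3); the Euler form is the usual $\dim \Hom - \dim \Ext^{1}$ of a hereditary category, whose symmetrization is the root-system inner product, giving (5); and the twist by $-2$ corresponds under $R\rho_{\hh}$ to the Coxeter (Auslander-Reiten) functor, which acts on $\K$ as a Coxeter element, giving (4). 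Since the construction of $\C$ does not involve $\hh$, these identifications, although each is proved through a choice of orientation, are canonical.

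The technical heart, and the step I expect to be the main obstacle, is (8) itself. I would construct $\rho_{\hh}$ before passing to quotients, using $\hh$ to single out a slice of $\Gammahat$ isomorphic to the quiver $(\Gamma, \Om_\hh)$ and sending the corresponding indecomposables of $\D$ to a generating set of representations; I would then derive this functor, show it descends to the $T^2$-quotients, and prove it is an equivalence, the essential point being that it carries the mesh relations in $\D$ to the Auslander-Reiten triangles in $\D^b(\Gamma, \Om_\hh)$. The delicate part is compatibility with reflection functors: changing $\hh$ by an elementary move (a reflection at a source or sink) should intertwine $R\rho_{\hh}$ with the corresponding BGP functor, and proving this naturality is what makes the family $\{R\rho_{\hh}\}$ coherent and thereby converts the orientation-dependent comparisons into the canonical, orientation-free statements (2)--(7). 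Once (8) is established with this compatibility, the remaining assertions follow by the transport-of-structure arguments sketched above.
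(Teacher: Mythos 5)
Your proposal follows essentially the same route as the paper: construct $\D$ from the translation quiver $\Gammahat$, identify its Auslander--Reiten quiver with $\Gammahat^{op}$ and its $\Hom$-spaces with paths modulo mesh relations, establish the slice-restriction equivalences $R\rho_{\hh}$ and their compatibility with the BGP reflection functors, deduce the periodicity $T^2=\tau^{-h}$ from the mesh-category automorphisms, and pass to the quotient $\C=\D/T^2$ (triangulated by Keller's results) to transport the root-system structure from $\D^b(\Gamma,\Om_{\hh})/T^2$. The decomposition into steps and the key lemmas invoked match the paper's own argument, so no further comparison is needed.
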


\begin{remark}
It has been pointed out that Kajiura, Saito and Takahashi construct a similar category using matrix factorizations (see \cite{kst}). Specifically, from each polynomial of type $A,D,E$ they construct a triangulated category $\Hall$, and equivalence $\Hall \to \D^{b} (\Gamma , \Om)$. However, our construction is quite different than their construction; we take a more combinatorial approach, using the Dynkin graph $\Gamma$ and the quiver $\Gammahat$ as a starting point, rather than the polynomial associated to $\Gamma$.
\end{remark}

\section{Preliminaries - Quivers and Preprojective Algebra}\label{s:quivers}
A quiver $\overrightarrow{\Gamma}$ is an oriented graph. The vertex set is denoted by $\Gamma_{0}$ and the arrow set is denoted by $\Gamma_{1}$. In what follows a quiver is obtained by orienting a graph $\Gamma$. In such a case the quiver is denoted by $\overrightarrow{\Gamma} = (\Gamma, \Om)$ where $\Om$ is an orientation of the graph $\Gamma$. There are two functions $s,t : \Gamma_{1} \to \Gamma_{0}$, called ``source" and ``target" respectively, defined on an oriented edge $e : i\to j$ by $s(e) = i$ and $t(e) = j$.

Fix a field $\mathbb{K}$. For any quiver $\overrightarrow{\Gamma}$ let $P(\overrightarrow{\Gamma})$ be the following algebra. As an algebra it is generated by elements $\{ e \}_{e \in \Gamma_{1}} \cup \{ e_{i} \}_{i\in \Gamma_{0}}$. Here the elements $e_{i}$ are thought of as ``paths of length 0 from $i$ to $i$". Viewing a path as a sequence of edges, the multiplication of basis elements is given by concatenation of paths. 

\begin{defi} The algebra $P(\overrightarrow{\Gamma})$ defined above is called the {\em path algebra} of $\overrightarrow{\Gamma}$. It is an assiociative algebra with unit given by $1 = \sum_{i \in \Gamma_{0}} e_{i}$.
\end{defi} 

The algebra $P(\overrightarrow{\Gamma})$ is graded by path length and by the source and target of the path. This gives a decomposition 
\begin{equation}\label{e:decomp}
P (\overrightarrow{\Gamma}) = \bigoplus_{i,j \in \Gamma ; k \in \N} P_{i,j;k}
\end{equation} 
where $P_{i,j;k}$ is the space spanned by paths of length $k$ from $i$ to $j$. (Here an edge has length 1, and the idempotent corresponding to a vertex has length 0.)

The preprojective algebra of a quiver $\overrightarrow{\Gamma}$ is defined as follows:
Consider the double quiver $\overline{\Gamma}$ which has the same vertex set as $\overrightarrow{\Gamma}$ but for every arrow $e:i\to j$ there is an arrow $\overline{e} :j \to i$.
Choose a function $\epsilon : \overline{\Gamma}_{1} \to \{ \pm 1 \}$ so that $\epsilon (e) + \epsilon ( \overline{e} ) = 0$. For each vertex $i\in \overrightarrow{\Gamma}$ define $\theta_{i} \in P_{i,i;2}$ by
\begin{equation}\label{e:mesh}
\theta_{i} =\sum_{s(e)=i}  \epsilon (e)  \overline{e} e \in P_{i,i;2}
\end{equation}

\begin{defi} The {\em preprojective algebra} $\Pi (\Gamma)$ of $\Gamma$ is defined as $P(\overline{\Gamma}) / J$ where $J$ is the ideal generated by the $\theta_{i}$'s. The ideal $J$ is called the ``mesh" ideal.
\end{defi}
Note that this algebra is independent of the choice of $\epsilon$ and depends only on the underlying graph $\Gamma$, not on the orientation $\Om$. (See \cite{lusztig2} for details.)

Alternatively, we can define $\Pi (\Gamma)$ as follows:  Let $R$ be the algebra of functions $\Gamma \mapsto \mathbb{K}$ with pointwise multiplication. Let $V$ be the vector space spanned by the edges of the double quiver $\overline{\Gamma}$ and let $L$ be the $R$-submodule of $V\otimes V$ generated by the element $\theta = \sum_{i \in \Gamma} \theta_{i} $. Consider the embedding $j:L \hookrightarrow V\otimes V$. Denote by $J$ the quadratic ideal in $V\otimes V$ generated by $L$. Then the preprojective algebra of $\Gamma$ is $\Pi = T_{R} (V)/J$. (See \cite{etginz} for details.)

A representation of a quiver $\overrightarrow{\Gamma}$ is a choice of vector space $X(i)$ for every vertex in $\Gamma_{0}$ and linear map $x_{e} : X(i) \to X(j)$ for every edge $e: i \to j$. A morphism $\Phi : X \to Y$ of representations is a collection of linear maps $\Phi (i) : X(i) \to Y(i)$ such that the following diagram is commutative for every edge $e: i \to j$.
$$
\xymatrix{
X(i) \ar[r]^{x_{e}} \ar[d]^{\Phi (i)} & X(j) \ar[d]_{\Phi (j)} \\
Y(i) \ar[r]^{y_{e}} & Y(j) \\
}$$
Denote the Abelian category of representations of $\overrightarrow{\Gamma}$ by $\Rep (\overrightarrow{\Gamma})$. 

\begin{remark} Note that a representation of $\overrightarrow{\Gamma}$ is the same as a module over the path algebra $P( \overrightarrow{\Gamma} )$ and that the notion of morphism for each coincide as well. (See \cite{crawley-boevey} for details.)
\end{remark}

For each vertex $i$ define a representation $P_{i}$ by setting $P_{i} (j) = P_{i,j}$ the space spanned by paths from $i$ to $j$ in $\overrightarrow{\Gamma}$. This representation is projective and indecomposable, and any indecomposable projective is isomorphic to $P_{i}$ for some vertex $i$ (see \cite{crawley-boevey}). For any vertex $i$ define a simple object $S_{i}$ by setting $S_{i} (j) = \delta_{i,j} \mathbb{K}$.

Now consider the corresponding bounded derived category, denoted by $\D^{b} (\overrightarrow{\Gamma})$. Recall that an object in $\D^{b} (\overrightarrow{\Gamma} )$ can be thought of as a choice of bounded complex $X^{\bullet} (i)$ for each vertex $i \in \Gamma$, together with maps of complexes $x_{e} : X^{\bullet} (i) \to X^{\bullet} (j)$ for each edge $e:i \to j$.\\
In $\D^{b} (\overrightarrow{\Gamma})$ the indecomposable objects, up to isomorphism, are of the form $X[k]$, where $X$ is an indecomposable object in $\Rep (\overrightarrow{\Gamma})$ considered as a complex concentrated in degree 0. (See \cite{happel} for details.) When $\Gamma$ is Dynkin, the Auslander-Reiten quiver of the derived category is the quiver $\Gammahat$ defined in Section~\ref{s:gammahat}. For more details about the structure of the derived category see \cite{happel}.

\begin{defi}
The {\em Auslander-Reiten} quiver of a category $\A$ is defined as follows:
\begin{enumerate}
\item The vertices are the set $\Ind(\A)$ of non-zero isomorphism classes of indecomposable objects.
\item For vertices $[X] , [Y]$ there are $d_{ij}$ edges $e : [X] \to [Y]$, where $d_{ij} = \dim \text{Irr} (X, Y)$ and $\text{Irr} (X,Y)$ denotes the irreducible morphisms $X\to Y$.
\end{enumerate}
For more details, such as the definition of irreducible morphism, see \cite{ars} Chapter VII.
\end{defi}

On $\D^{b} (\overrightarrow{\Gamma})$ there are functors $\nu$ and $\tau$ defined by:
\begin{align} 
&\Hom (X,Y) = (\Ext^{1} (Y, \tau X))^{*} \\
&\nu (X) = ( \Hom_{P} (X, P) )^{*}
\end{align}
where in the second line a representation $X$ is thought of as a module over the path algebra $P = P(\overrightarrow{\Gamma})$.\\

\begin{remark}
In the setting of equivariant sheaves on $\PP^{1}$ considered in \cite{kirillov}, the functor $\tau$ is given by tensoring with the dualizing sheaf $\OO (-2)$.
\end{remark}

\subsection{BGP Reflection Functors} 
Despite the fact that Gabriel's Theorem establishes a bijection between the Grothendieck groups of $\Rep (\overrightarrow{\Gamma})$ for any choice of $\Om$, it is not the case that these categories are equivalent for different choices of $\Om$. 

Let $\overrightarrow{\Gamma} = (\Gamma , \Om)$ be a quiver and let $i \in \Gamma_{0}$ be a sink (or source) in the orientation $\Om$. Define a new orientation $s_{i} \Om$ of $\Gamma$ by reversing all arrows at $i$. Hence the sink becomes a source (and the source a sink).

Let $i \in \Gamma$ be a source (or sink) in the orientation $\Om$ and consider the categories $\Rep (\Gamma , \Om)$ and $\Rep (\Gamma , s_{i} \Om)$. These two categories are not equivalent, however there is a nice functor $S_{i}^{\pm} : \Rep (\Gamma , \Om) \to \Rep (\Gamma , s_{i} \Om)$ between them. (See \cite{bgp}.)  These functors are left (respectively right) exact. Although this functor is not an equivalence, the corresponding derived functor $RS_{i}^{+}$ (respectively $LS_{i}^{-}$) provides an equivalence of triangulated categories $\D^{b} (\Gamma, \Om) \to \D^{b} (\Gamma , s_{i} \Om)$. A brief description of the derived functors is given for the reader's convenience.
 
Let $\overrightarrow{\Gamma} = (\Gamma, \Om)$ and let $i$ be a source for $\Om$. Define a functor $RS_{i}^{+} : \D^{b} (\Gamma, \Om) \to \D^{b} (\Gamma , s_{i} \Om)$ by setting
$$ RS_{i}^{+} X (j) = \begin{cases}
Cone(X(i) \to \bigoplus_{i\to k} X(k)) &\text{if} \ \ i=j \\
X(j) &\text{otherwise.}
\end{cases}
$$ 
For an edge $e: j \to k$ in $\Om$ let $\overline{e}$ denote the corresponding edge in $s_{i} \Om$, the map $RS_{i}^{+} (x_{\overline{e}})$ is given by 
$$RS_{i}^{+} (x_{\overline{e}}) = \begin{cases}
x_{e} &\text{if } s(e) \neq i \\
(0 ,\iota_{j}) : X(j) \to X^{\bullet +1}(i) \bigoplus \oplus_{i\to j} X(j) &\text{if } s(e) = i
\end{cases}
$$
where $\iota_{j}$ is the embedding of $X(j)$ into $\oplus X(j)$.

Similarly for $i$ a sink define $LS_{i}^{-} : \D^{b} (\Gamma, \Om) \to \D^{b} (\Gamma , s_{i} \Om)$ by 
$$ LS_{i}^{-} X (j) = \begin{cases}
Cone(\bigoplus_{i\to k} X(k) \to X(i) ) &\text{if} \ \ i=j \\
X(j) &\text{otherwise.}
\end{cases}
$$ 
For an edge $e: j \to k$ in $\Om$ let $\overline{e}$ denote the corresponding edge in $s_{i} \Om$, the map $LS_{i}^{-} (x_{\overline{e}})$ is given by 
$$LS_{i}^{-} (x_{\overline{e}}) = \begin{cases}
x_{e} &\text{if } t(e) \neq i \\ 
(\iota_{j}^{+1}, 0 ) : X(j) \to ( \oplus_{j\to i} X^{\bullet +1} (j)  \bigoplus X^{\bullet}(i) ) &\text{if } t(e) = i
\end{cases}
$$
where $\iota_{j}$ is the embedding of $X(j)$ into $\oplus X(j)$.

These are the derived functors of the well-known ``BGP reflection functors" (see \cite{gelfman}). Note that the functors $RS_{i}^{+}$ and $LS_{i}^{-}$ are inverse to each other. The name reflection functor comes from the action of these functors on the Grothendieck group. In the setting of Gabriel's Theorem, the Grothendieck group of $\D^{b} (\overrightarrow{\Gamma}) /T^{2}$ is isomorphic to the root lattice, indecomposable objects correspond to the roots and the reflection functors act on the Grothendieck group as the corresponding simple reflections in the Weyl group of the associated root system.

\section{The Quiver $\Gammahat$}\label{s:gammahat}
Let $\Gamma$ be a finite graph without cycles. So in particular, $\Gamma$ is bipartite. Let $\Gamma = \Gamma_0\sqcup \Gamma_1$ be a bipartite splitting.
Define the quiver $\Gamma \times \Z$ as
follows:
\begin{align*}
&\text{vertices}\colon \Gamma\times \Z\\
&\text{edges}\colon \text{for each $n\in \Z$ and edge }i-j \text{ in
}\Gamma, \text{ there are oriented edges }\\
&\qquad(i,n)\to(j,n+1), (j,n)\to(i,n+1) \text{
in }\Z\Gamma
\end{align*}

For $\Gamma$ as above, with bipartite with splitting $\Gamma = \Gamma_0\sqcup \Gamma_1$, the quiver $\Z\Gamma$ is disconnected: $\Gamma \times \Z =(\Gamma \times \Z)_0\sqcup (\Gamma \times \Z)_1$, where 
$$(\Gamma \times \Z )_k=\{(i,n)\st n+p(i) \equiv k\mod 2\}$$ where $p(i)=0$ for $i\in \Gamma_0$ and $p(i)=1$ for $i\in \Gamma_1$.  

\begin{defi} Define the quiver $\Gammahat$ by setting 
\begin{equation}
\Gammahat = \{(i,n) \subset \Gamma \times \Z \st n+p(i) \equiv 0 \mod 2\} = (\Gamma \times \Z)_0
\end{equation}

Let $\Gamma$ be an $A,D,E$ Dynkin diagram with Coxeter number $h$, so in particular $\Gamma$ is bipartite. In this case, define a cyclic version of $\Gammahat$ by setting 
\begin{equation}\label{e:Ihat}
\Gammahat_{cyc} = \{ (i,n) \subset \Gamma \times \Z_{2h} \st n+p(i) \equiv 0 \mod 2 \}
\end{equation}
\end{defi}

\begin{example}
    For the graph $ \Gamma = D_{5}$ the quiver $\Gammahat$ is shown in
    Figure~\ref{f:Ihat-D5}. 
    \begin{figure}[ht]
    \centering
        \includegraphics[height=3.00in]{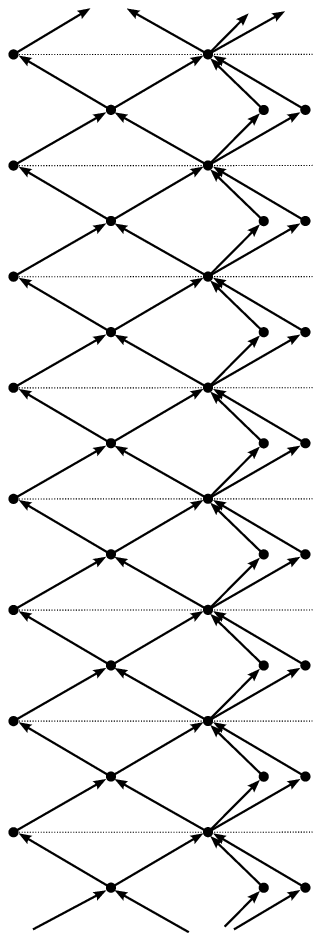}
        \caption{The quiver $\Gammahat$ for graph $\Gamma = D_{5}$. For $D_{5}$ the Coxeter number is 8, so by identifying the outgoing arrows at the top level and the incoming arrows at the bottom level in this figure, one obtains $\Gammahat_{cyc}$.}\label{f:Ihat-D5}
    \end{figure}
\end{example}

\begin{remark}
Note that the quivers $\Gammahat$ and $\Gammahat_{cyc}$ do not depend on the choice of $p$, and in fact can be canonically defined as one (of two identical) connected component of the quiver $\Z \times \Gamma $. The presentation given here is to simplify notation and make it possible to write explicit formulae.
\end{remark}

It is well-known that $\Gammahat$ and $\Gammahat_{cyc}$ are the Auslander-Reiten quivers of the categories $\D^{b}( \Gamma , \Om)$ and $\D^{b} (\Gamma , \Om) / T^{2}$ respectively, when $\Gamma$ is of type A,D,E. (See \cite{happel}.)

The following basic properties of $\Gammahat$ also hold for $\Gammahat_{cyc}$. For brevity only $\Gammahat$ is considered.

Define a ``twist" map $\tau\colon \Gammahat \to
\Gammahat$ by
\begin{equation}\label{e:tau}
\tau(i,n)=(i,n+2).
\end{equation}

\begin{defi}\label{d:height}
A function $\hh \colon \Gamma \to \Z$ satisfying $\hh (j)=\hh (i)\pm 1$ if $i,j$ are connected by an edge in $\Gamma$ and satisfying $\hh (i)\equiv p(i)\mod     2$,  will be called a {\em height function}. (Here  $p$ is the parity function defined in the beginning of this section.)
\end{defi}

\begin{defi}\label{d:slice}
Following \cite{gabriel2}, a connected full subquiver of $\Gammahat$ which contains a unique representative of $\{ (i,n) \}_{n\in \Z}$ for each $i\in \Gamma$ will be called a {\em slice}.
\end{defi}

Any height function $\hh$ defines a slice $\Gamma_{\hh} = \{(i, \hh (i))\st i\in \Gamma \}\subset \Gammahat$; it also defines an  orientation $\Om_{\hh}$ on $\Gamma$ where $i\to j$ if $i,j$ are connected by an edge and $\hh (j)= \hh (i)+1$. It is easy to see that two height functions give the same orientation if and only if they differ by an additive constant, or equivalently, if the corresponding slices are obtained one from another by applying a power of $\tau$. Conversely, the second coordinate of any slice defines a height function.

Let $\hh$ be a height function and let $i\in \Gamma$ be a source for the corresponding orientation $\Om_{\hh}$. Define a new height function $s^{+}_{i} \hh$ by 
$$s^{+}_{i} \hh (j) = \begin{cases}
\hh (j)+2 &\text{if} \ \ j=i \\
\hh (j) &\text{if} \ \ j\neq i 
\end{cases}
.$$
Similarly, if $i\in \Gamma$ is a sink for the corresponding orientation $\Om_{\hh}$, define a new height function $s^{-}_{i} \hh$ by 
$$s^{-}_{i} \hh (j) = \begin{cases}
\hh (j)-2 &\text{if} \ \ j=i \\
\hh (j) &\text{if} \ \ j\neq i 
\end{cases}
.$$

Note that the orientation $\Om_{s^{\pm}_{i} \hh}$ of $\Gamma$ is obtained by reversing all arrows at $i$, and that any orientation of $\Gamma$ can be obtained by a sequence of such operations.  It is well-known that for any two height functions $\hh, \hh^{\prime}$ one can be obtained from the other by a sequence of operations $s_{i}^{\pm}$. 

For $\Gamma$ Dynkin of type $A,D,E$, with Coxeter number $h$, define the following permutations on $\Gammahat$ (and $\Gammahat_{cyc}$). \\
The ``Nakayama" permutation given by:
\begin{equation}\label{e:nakayama}
\nu_{\Gammahat} (i,n) = (\check{\imath} \ ,n+h-2)
\end{equation}
The ``Twisted Nakayama" permuation given by:
\begin{equation}\label{e:gamma}
\gamma_{\Gammahat} (i,n) = (\check{\imath} \ , n+h) = \tau \circ \nu_{\Gammahat} (i, n)
\end{equation}
Here $\check{\imath}$ is defined by $-\alpha_{i}^{\Pi} = w_{0}^{\Pi} (\alpha_{\check{\imath}}^{\Pi})$, where $w_{0}^{\Pi} \in W$ is the longest element and $\Pi$ is any set of simple roots. Thus for the root systems of type $A, D_{2n+1}, E_{6}$ this map corresponds to the diagram automorphism, while for $D_{2n}, E_{7}, E_{8}$ this map is just the identity.

It remains to verify that $\nu_{\Gammahat}$ is well-defined. This only requires checking that the image does, in fact,
lie in $\Gammahat$.  Note that if $h$ is even $p(\check{\imath}) = p(i)$ and $k+h=k
\mod 2$, so $(\check{\imath}, k+h) \in \Gammahat$. If $h$ is odd, then $R=A_{2n}$,
$h=2n+1$ and $\check{\imath} = 2n-i+1$, so that $p(\check{\imath})=p(i) + 1$
and $k+h=k+1 \mod 2$, so  $(\check{\imath}, k+h) \in \Gammahat$. Hence the map
$\nu_{\Gammahat}$ is well-defined. 

\begin{example}
The maps $\nu_{\Gammahat}$ and $\gamma_{\Gammahat}$ for the case $\Gamma = A_{4}$ are shown in Figure~\ref{f:nugamma}.
\begin{figure}[ht]
\centering
\begin{overpic}
{nugamma}
\put (-15,92){$\gamma_{\Gammahat} \Gamma_{\hh}$}
\put (-9,17){$\Gamma_{\hh}$}
\put (-15,72){$\nu_{\Gammahat} \Gamma_{\hh}$}
\end{overpic}

\caption{The maps $\nu_{\Gammahat}$ and $\gamma_{\Gammahat}$ in the case $\Gamma = A_{4}$. A slice $\Gamma_{\hh}$ and its images under $\nu_{\Gammahat}$ and $\gamma_{\Gammahat}$ are shown in bold.}\label{f:nugamma}
\end{figure}
\end{example}
In terms of the Auslander-Reiten quiver, these maps correspond to the functors $\tau$ and $\nu$ defined on $\Rep (\Gamma , \Om)$.

\section{The Category $\D$}\label{s:catD}
In this section a precursor to the conjectured category is introduced and several basic results are established. To begin a few preliminary results are required.

\begin{defi}
Let $\overrightarrow{\Gamma}$ be any quiver and $e: i\to j$ be an edge in $\overrightarrow{\Gamma}$. For any object $X\in \D (\overrightarrow{\Gamma})$ define the complex of vector spaces $Cone_{e}(X)$ as the cone of the map $x_{e} : X(i) \to X(j)$. This will be called the ``Cone over the edge $e$".
\end{defi}

\begin{remark}
Note that $x_{e}: X(i) \to X(j)$ is an honest morphism between complexes of vector spaces, not a morphism in a derived category.
\end{remark}

\begin{lemma}\label{l:edgecone}
For any quiver $\overrightarrow{\Gamma}$ and edge $e:i \to j$, the map $Cone_{e} : \D^{b} (\overrightarrow{\Gamma}) \to \D^{b} (Vect)$ is functorial.
\end{lemma}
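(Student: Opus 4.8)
The plan is to exhibit $Cone_e$ first as an honest functor on chain-level categories and then to descend it to the derived category via the universal property of localization. The essential conceptual point --- and the reason the statement does not conflict with the familiar fact that cones are \emph{not} functorial on a derived category --- is exactly the one flagged in the preceding Remark: for a fixed object $X$ the arrow $x_e : X(i)\to X(j)$ is a genuine morphism of complexes of vector spaces, not a morphism in a derived category. Hence $Cone_e(X)$ is a well-defined complex on the nose, and the only things to check are that the assignment $X\mapsto Cone_e(X)$ respects morphisms and sends quasi-isomorphisms to quasi-isomorphisms.

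First I would work at the chain level. An object of $\D^b(\overrightarrow{\Gamma})$ is a family of bounded complexes $X^\bullet(k)$ together with chain maps $x_e$, and a morphism $\Phi\colon X\to Y$ is a family $\Phi(k)\colon X^\bullet(k)\to Y^\bullet(k)$ making every edge square commute; in particular the square with horizontal maps $x_e,y_e$ and vertical maps $\Phi(i),\Phi(j)$ commutes. Such a commuting square induces a canonical map of cones $Cone(x_e)\to Cone(y_e)$, namely $\Phi(i)[1]\oplus\Phi(j)$ on $Cone(x_e)=X^\bullet(i)[1]\oplus X^\bullet(j)$, and this assignment visibly takes identities to identities and composites to composites. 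So $Cone_e$ is a functor from bounded complexes of representations to bounded complexes of vector spaces. A routine verification shows it also respects chain homotopies (a homotopy witnessing $\Phi\simeq\Psi$ produces degreewise a homotopy between the induced maps on cones), so $Cone_e$ descends to a functor on the homotopy categories.

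Next I would check that $Cone_e$ preserves quasi-isomorphisms, which is the only step with genuine mathematical content. If $\Phi$ is a quasi-isomorphism then in particular $\Phi(i)$ and $\Phi(j)$ are quasi-isomorphisms of complexes of vector spaces. The cones sit in the distinguished triangles $X^\bullet(i)\xrightarrow{x_e}X^\bullet(j)\to Cone(x_e)\to X^\bullet(i)[1]$ and similarly for $Y$, and $\Phi$ furnishes a morphism between these triangles; applying the five lemma to the associated long exact cohomology sequences forces $Cone(x_e)\to Cone(y_e)$ to be a quasi-isomorphism. Since the bounded derived category is the localization of the homotopy category at quasi-isomorphisms, the universal property of localization then yields a unique factorization through a functor $\D^b(\overrightarrow{\Gamma})\to\D^b(Vect)$, which is the desired $Cone_e$. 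I do not anticipate a serious obstacle: the one place demanding care is maintaining the clean separation between the honest structural map $x_e$ and morphisms in the derived category, after which the argument reduces to the standard principle that a functor preserving quasi-isomorphisms descends to the localization.
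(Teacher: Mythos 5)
Your proposal is correct and follows essentially the same route as the paper: both exploit that $x_e$ is a genuine chain map so that the strictly commuting edge-square gives a chain-level map of cones, observe that this preserves quasi-isomorphisms, and then descend to the derived category (the paper does the descent by applying the chain-level construction to a roof diagram, which is just the concrete form of your appeal to the universal property of localization). Your added details --- the explicit $\Phi(i)[1]\oplus\Phi(j)$ formula, compatibility with homotopies, and the five-lemma argument --- only flesh out steps the paper asserts more tersely.
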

\begin{proof}
Suppose that $X,Y \in \D (\overrightarrow{\Gamma})$ and that $F:X \to Y$ is a map of complexes of representations. Let $x_{e} : X(i) \to X(j)$ and  $y_{e} : Y(i) \to Y(j)$ be the maps of complexes corresponding to the edge $e$. Then $F(j) \circ x_{e} = y_{e} \circ F(i)$ and $d_{Y} \circ F = F \circ d_{X}$. By definition of $Cone_{e}$, this gives a map

$$\xymatrix{
Cone_{e} (x_{e}) \ar[d]^{ Cone_{e} (F) } & = (X^{\bullet +1}(i) \oplus X(j) , d_{X}(i) + x_{e} , d_{X} (j)) \\
Cone_{e} (y_{e})  & = (Y^{\bullet +1}(i) \oplus Y(j) , d_{Y}(i) + y_{e} , d_{Y} (j)) \\
}$$

Also note that this shows that if $F$ is a quasi-isomorphism, then the induced map $Cone_{e}(F)$ is also a quasi-isomorphism. \\
Let $f \in \Hom_{\D (\overrightarrow{\Gamma})} (X, Y)$ and let $X \leftarrow Z \to Y$ be a roof diagram for $f$. Then the above shows that there is an induced roof diagram $Cone_{e} (x_{e}) \leftarrow Cone_{e} (z_{e}) \to Cone_{e} (y_{e})$. Hence $Cone_{e}$ is functorial.
\end{proof}

Let $\Gamma$ be a finite graph without cycles. Let $\D (\Gammahat)$ be the (unbounded) derived category of representations of $\Gammahat$.

\begin{defi}
Define a category $\D$ defined as follows: $Obj_{\D} = \{ (X , \Phi ) \}$ where $X \in \D (\Gammahat )$ is an object such that for any $q \in \Gammahat$ the complex $X(q)$ is bounded, and $\Phi$ is a collection of isomorphisms of complexes of vector spaces $\phi_{q} : X (\tau q) \to Cone(x_{q})$ for each $q \in \Gammahat$ such that the following diagram commutes. 

$$\xymatrix{
\bigoplus_{q^{\prime}: q\to q^{\prime}} X(q^{\prime}) \ar[d]_{\sum_{q^{\prime} : q\to q^{\prime}} x_{q^{\prime}}} \ar[dr]^{\iota} & \\
X(\tau q) \ar[r]_{\phi_{q}} & Cone (x_{q}) \\
}$$

Here $x_{q} : X(q) \to \bigoplus_{q\to q^{\prime}} X(q^{\prime})$ is the map given by edges $q\to q^{\prime}$, $x_{q^{\prime}} : X(q^{\prime}) \to X(\tau q)$ is the map given by the edge $q^{\prime} \to \tau q$, $\iota$ is the inclusion map and $Cone(x_{q}) = \bigoplus_{e: s(e)=q} Cone_{e} (x_{e})$ is the direct sum of Cone over the edges $e: q\to q^{\prime}$, as defined above.

A morphism $F : (X, \Phi) \to (Y, \Psi )$ in $\D$ is given by a morphism $F : X \to Y$ in $\D (\Gammahat)$ such that the following diagram is commutative: 
\begin{equation}\label{e:morphism}
\xymatrix{
X(\tau q) \ar[r]^{F(\tau q)} \ar[d]^{\phi_{q}} & Y(\tau q) \ar[d]^{\psi_{q}} \\
Cone (x_{q} ) \ \ \ \ar[r]^{Cone( F( \tau q) ) } & \ \ \ Cone(y_{q}) \\
}
\end{equation}

\begin{defi} An object $X \in \D (\Gammahat)$ is said to satisfy the {\em fundamental relation} if for any $q \in \Gammahat$ there is a choice of map of complexes $z : X(\tau q) \to X(q) [1]$ so that 
\begin{equation}\label{e:fundamentalrelation}
X(q) \stackrel{x_{q}}{\to} \bigoplus_{q \to q^{\prime}} X(q^{\prime}) \stackrel{\sum x_{q^{\prime}}}{\to} X(\tau q) \stackrel{z}{\to} X(q)[1]
\end{equation}
is an exact triangle. Here $x_{q}, x_{q^{\prime}}$ are the maps corresponding to edges $q\to q^{\prime}$ and $q^{\prime} \to \tau q$ in $\Gammahat$.
\end{defi}

\begin{prop}
For any object $(X, \Phi) \in \D$, $X$ satisfies the fundamental relation.
\end{prop}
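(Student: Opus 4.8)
The plan is to unwind the definition of an object $(X,\Phi)\in\D$ and show directly that the data of the isomorphism $\phi_q$ and the commuting triangle force the defining sequence~\eqref{e:fundamentalrelation} to be an exact triangle. The key observation is that the \emph{mapping cone construction} already produces an exact triangle ``for free": for the honest map of complexes $x_q\colon X(q)\to\bigoplus_{q\to q'}X(q')$, the standard mapping-cone triangle
$$
X(q)\stackrel{x_q}{\longrightarrow}\bigoplus_{q\to q'}X(q')\stackrel{\iota}{\longrightarrow}Cone(x_q)\stackrel{\delta}{\longrightarrow}X(q)[1]
$$
is exact in $\D(Vect)$ by the axioms of the triangulated structure, where $\iota$ is the canonical inclusion into the cone and $\delta$ is the canonical connecting map. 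So the entire content of the proposition is to \emph{transport} this triangle across the isomorphism $\phi_q\colon X(\tau q)\to Cone(x_q)$, replacing $Cone(x_q)$ by $X(\tau q)$, and to check that under this transport the inclusion $\iota$ becomes exactly the map $\sum x_{q'}$ appearing in~\eqref{e:fundamentalrelation}.

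First I would fix $q\in\Gammahat$ and write down the canonical mapping-cone triangle displayed above. Second, I would invoke the defining commutativity of $\D$: the triangle in the definition says precisely that $\phi_q\circ\bigl(\sum_{q'\colon q\to q'}x_{q'}\bigr)=\iota$, i.e.\ the composite $\bigoplus_{q\to q'}X(q')\to X(\tau q)\xrightarrow{\phi_q}Cone(x_q)$ equals the canonical inclusion $\iota$. Third, since $\phi_q$ is an \emph{isomorphism} of complexes, it is in particular a quasi-isomorphism, hence an isomorphism in $\D(Vect)$; applying the standard fact that an isomorphism of triangulated categories (here just the automorphism of $\D(Vect)$ given by replacing one vertex of the triangle by an isomorphic object) carries exact triangles to exact triangles, I conclude that
$$
X(q)\stackrel{x_q}{\longrightarrow}\bigoplus_{q\to q'}X(q')\stackrel{\sum x_{q'}}{\longrightarrow}X(\tau q)\stackrel{z}{\longrightarrow}X(q)[1]
$$
is exact, where $z:=\delta\circ\phi_q$. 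This exhibits the required map $z\colon X(\tau q)\to X(q)[1]$ and completes the verification that $X$ satisfies the fundamental relation.

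The main point to handle carefully is the identification of the second map. The definition of $\D$ phrases the constraint via the diagram with $\iota$ and $\phi_q$, so I must be explicit that ``completing the mapping-cone triangle along $\phi_q^{-1}$'' replaces $\iota$ by $\phi_q^{-1}\circ\iota=\sum_{q'}x_{q'}$ (the last equality being exactly the commuting triangle in the definition). I expect this bookkeeping---keeping straight that $\iota=\phi_q\circ(\sum x_{q'})$ rather than the reverse, and that $\phi_q$ is a genuine isomorphism of complexes and not merely a morphism in the derived category (as emphasized in the earlier remark about $x_e$)---to be the only delicate step; once it is pinned down, exactness is immediate from the triangulated axioms and no further computation is needed. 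I would also remark that the choice of $z$ is not unique, which is harmless since the fundamental relation only asserts the \emph{existence} of such a $z$ making the sequence an exact triangle.
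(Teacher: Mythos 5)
Your argument is correct and is precisely the content the paper compresses into its one-line proof ``by definition of an object $(X,\Phi)\in\D$'': the standard mapping-cone triangle for $x_q$ is exact, and the commuting triangle in the definition of $\D$ identifies $\iota$ with $\phi_q\circ\bigl(\sum x_{q'}\bigr)$, so transporting along the isomorphism $\phi_q$ yields the fundamental relation with $z=\delta\circ\phi_q$. Your write-up simply makes explicit the bookkeeping the paper leaves implicit.
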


\begin{proof}
By definition of an object $(X , \Phi) \in \D$.
\end{proof}

It remains to show that the category $\D$ inherits the structure of a triangulated category from $\D (\Gammahat)$. To do this define the translation functor in $\D$ as follows: $T_{\D} (X, \Phi) = (T_{\D (\Gammahat)} X, \Phi [1])$, where $\Phi [1] = \{ \phi_{q} [1] \}$. The distinguished triangles in $\D$ are triples $( (X, \Phi), (Y, \Psi) , (Z, \varphi) )$ where $(X,Y,Z)$ is a  distinguished triangle in $\D (\Gammahat)$.
\end{defi}

\begin{lemma}\label{l:cone}
\par\indent
\begin{enumerate}
\item If $(X, \Phi), (Y, \Psi) \in \D$ and $F:X \to Y$ is a map of complexes satisfying Equation~\ref{e:morphism}, then $( Cone(F) , \varphi) \in \D$, where the maps $\varphi_{q}$ are those induced by $\phi_{q}, \psi_{q}$.
\item Any distiguished triangle in $\D$ is isomorphic to one of the form 
$$ (X, \Phi) \stackrel{F}{\to} (Y, \Psi) \to (Cone(F), \varphi) \to X[1],$$
where $F$ is a map of complexes.
\end{enumerate}
\end{lemma}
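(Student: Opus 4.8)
The plan is to prove (1) by an explicit mapping-cone computation and then deduce (2) from (1) together with the standard structure theory of the derived category $\D(\Gammahat)$. For (1), I would first record the shape of $Cone(F)$ as an object of $\D(\Gammahat)$. Since mapping cones in a category of representations are computed vertexwise, at each $q\in\Gammahat$ one has $Cone(F)(q)=X(q)[1]\oplus Y(q)$, and because $F$ is an honest map of complexes of representations it commutes strictly with every edge map; hence, by the functoriality of the cone recorded in Lemma~\ref{l:edgecone}, the edge map of $Cone(F)$ along $e\colon q\to q'$ is the \emph{block-diagonal} map $x_e[1]\oplus y_e$. Summing over the outgoing edges, the structure map $Cone(F)_q$ is block-diagonal $x_q[1]\oplus y_q$, so taking the cone over the edges splits as $Cone(Cone(F)_q)\cong Cone(x_q)[1]\oplus Cone(y_q)$. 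On the other side, $Cone(F)(\tau q)=X(\tau q)[1]\oplus Y(\tau q)$, and I would define $\varphi_q$ to be $\phi_q[1]\oplus\psi_q$ followed by this splitting isomorphism.

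The one place the hypothesis enters is in checking that $\varphi_q$ is a map of complexes. The differential on $Cone(F)(\tau q)$ contains the off-diagonal term $F(\tau q)$, while the differential on $Cone(x_q)[1]\oplus Cone(y_q)$ contains the off-diagonal term given by the map $Cone(x_q)\to Cone(y_q)$ induced by $F$; the identity making the two agree is exactly the commutativity of Equation~\ref{e:morphism}. Since $\phi_q$ and $\psi_q$ are isomorphisms, each summand of $\varphi_q$ is invertible, so $\varphi_q$ is an isomorphism of bounded complexes, which supplies both the boundedness of $Cone(F)(q)$ and the invertibility required for membership in $\D$.

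The main obstacle is the remaining verification that the defining triangle of $\D$ commutes for the pair $(Cone(F),\varphi)$. I would carry this out as a nine-lemma / $3\times3$ diagram chase, overlaying the defining triangle for $(X,\Phi)$ shifted by $[1]$ with that for $(Y,\Psi)$: the diagonal entries reproduce the two given commuting triangles, while the $F$-cross terms are controlled once more by Equation~\ref{e:morphism}, using the functoriality of $Cone_e$ from Lemma~\ref{l:edgecone}. The only delicate point is the bookkeeping of the shifts and signs in the cone differential; no new idea beyond careful tracking should be needed.

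For (2), I would invoke the standard fact that, $\D(\Gammahat)$ being a localization of the homotopy category, every distinguished triangle in it is isomorphic to the image of a mapping-cone triangle of a genuine chain map. Given a distinguished triangle in $\D$, its underlying triangle in $\D(\Gammahat)$ is therefore isomorphic to $X'\stackrel{F}{\to}Y'\to Cone(F)\to X'[1]$ for some honest chain map $F$. I would transport the $\phi$-data along these isomorphisms of $\D(\Gammahat)$, check that the transported collections still satisfy the defining commuting triangle so that $(X',\Phi'),(Y',\Psi')\in\D$, and apply part (1) to conclude $(Cone(F),\varphi)\in\D$. Finally the transporting isomorphisms satisfy Equation~\ref{e:morphism} by construction, since they intertwine the cone data, so they are isomorphisms in $\D$; this exhibits the original triangle as isomorphic in $\D$ to the asserted standard form.
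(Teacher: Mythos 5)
Your proposal is correct and follows essentially the same route as the paper: reduce (2) to (1) via the standard fact that every distinguished triangle in $\D(\Gammahat)$ is isomorphic to a mapping-cone triangle of an honest chain map, and prove (1) by identifying $Cone(z_q)$ with $Cone(F(\tau q))$ as graded vector spaces through the regrouping induced by $\phi_q$ and $\psi_q$, then matching the differentials using the commutativity of Equation~\ref{e:morphism}. Your explicit verification that the resulting $\varphi_q$ also satisfies the commuting-triangle condition in the definition of objects of $\D$ is a point the paper leaves implicit, but it is a refinement rather than a different argument.
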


\begin{proof}
First note that any distinguished triangle in $\D (\Gammahat)$ is isomorphic to a triangle of the form $X \stackrel{F}{\to} Y \to Cone(F) \to X[1]$, where $F: X \to Y$ is a morphism of complexes (see \cite{gelfman} Chapter IV \S 2). So once we show the first statement, the second follows.

It remains to verify that if $F : X \to Y$ is a morphism of complexes between objects in $\D$ then $Z=Cone(F)$ comes with an identification $Z(\tau q) \to Cone(z_{q})$ where $z_{q}$ is the map $Z(q) \to \oplus_{q\to q^{\prime}} Z (q^{\prime})$. To see this note that:

\begin{align*}
&Cone^{k} (z_{q}) = \\
&= Cone^{k} (Z(q) \to \oplus_{q \to q^{\prime}} Z(q^{\prime})) \\
&= Z^{k+1}(q) \bigoplus  (\oplus_{q\to q^{\prime}} Z^{k}(q^{\prime}) )\\
&= Cone^{k+1} (F(q)) \bigoplus (\oplus_{q\to q^{\prime}} Cone^{k} (F(q^{\prime})))\\
&= \big\{ X^{k+2} (q) \oplus Y^{k+1} (q) ) \big\} \bigoplus \big\{ (\oplus_{q\to q^{\prime}} ( X^{k+1} (q^{\prime}) \oplus Y^{k} (q^{\prime}) )\big\} \\
&= \big\{ X^{k+2} (q) \oplus ( \oplus_{q\to q^{\prime}} X^{k+1} (q^{\prime})) \big\}  \bigoplus  \big\{ Y^{k+1} (q) \oplus ( \oplus_{q\to q^{\prime}} Y^{k} (q^{\prime}))\big\} \\
&= Cone^{k+1} ( X(q) \to \oplus_{q\to q^{\prime}} X(q^{\prime})) \bigoplus Cone^{k} ( Y(q) \to \oplus_{q\to q^{\prime}} Y(q^{\prime})) \\
&\simeq X^{k+1} (\tau q) \bigoplus Y^{k} (\tau q)  \text{ \ \ (using the isomorphisms } \phi_{q}, \psi_{q}) \\
&=Cone^{k} (F(\tau q))\\
&=Z^{k}(\tau q).
\end{align*}

Denote this isomorphism by $\varphi_{q}$. To check that this is an isomorphism of complexes, not just of graded vector spaces, it remains to check that the differentials match. 
Let $\delta$ be the differential of $Cone(F)$ and let $d_{X}, d_{Y}$ be the differentials of $X,Y$. Denote by $D$ the differential of $Cone (X^{k +1} (q) \oplus Y^{k} (q) \stackrel{x_{q} + y_{q}}{\to} \bigoplus_{q\to q^{\prime}} (X^{k +1} (q^{\prime}) \oplus Y^{k} (q^{\prime}) ) )$.

\begin{align*}
\delta (\tau q) &= ( d_{X}^{+1} (\tau q) + F^{+1} (\tau q) , d_{Y} (\tau q)) \\
&= ( d_{X}^{+2} (q) + x_{q}^{+1} + F^{+1} (q) , \sum_{q \to q^{\prime}} d_{X}^{+1} + F^{+1} (q^{\prime}) , d_{Y}^{+1} (q) + y_{q} , \sum_{q \to q^{\prime}} d_{Y} (q^{\prime})) \\
&= ( d_{X}^{+2} (q) + x_{q}^{+1} + F^{+1} (q) , d_{Y}^{+1} (q) + y_{q} , \sum_{q \to q^{\prime}} d_{X}^{+1} + F^{+1} (q^{\prime}) , \sum_{q \to q^{\prime}} d_{Y} (q^{\prime})) \\
&= ( \delta^{+1} (q) + x_{q}^{+1} + y_{q}^{+1} , \delta (q^{\prime})) \\
&= D
\end{align*}

Hence $(Cone(F), \varphi) \in \D$.

\end{proof}

\begin{thm}\label{t:triangulated}
$\D$ is a triangulated category.
\end{thm}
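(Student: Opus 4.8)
The plan is to verify Verdier's four axioms (TR1)--(TR4) directly, using that $\D(\Gammahat)$ is triangulated and that the translation functor $T_\D$ and the class of distinguished triangles on $\D$ are inherited from $\D(\Gammahat)$. The whole point is that the underlying data already satisfy the axioms in $\D(\Gammahat)$; the only thing to check is that every object and every structure morphism manufactured by an axiom can be \emph{promoted} to $\D$ --- that is, cones acquire a canonical $\Phi$-structure (this is exactly Lemma~\ref{l:cone}(1)) and the connecting morphisms are compatible with these structures in the sense of diagram~\ref{e:morphism}. The recurring technical device is the reduction afforded by Lemma~\ref{l:cone}(2): after replacing a triangle by an isomorphic one, any morphism we must analyze can be taken to be an honest chain map $F$ between objects of $\D$, and in that situation Lemma~\ref{l:cone} and the functoriality of $Cone_e$ (Lemma~\ref{l:edgecone}) apply verbatim.

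For (TR1): the zero complex with its (trivial) structure lies in $\D$, and the triangle $X\xrightarrow{\id}X\to 0\to X[1]$ is distinguished in $\D(\Gammahat)$, hence in $\D$; closure under isomorphism is immediate from the definition of distinguished triangles in $\D$. To extend a morphism $f\colon(X,\Phi)\to(Y,\Psi)$ to a triangle, I would first represent the triangle on $f$ in $\D(\Gammahat)$ by the cone triangle of a chain map $F$ (Gelfand--Manin IV \S 2, as used already in the proof of Lemma~\ref{l:cone}); Lemma~\ref{l:cone}(1) then puts $(Cone(F),\varphi)$ in $\D$, so the triangle is distinguished in $\D$. For (TR2), I would first check that $T_\D$ preserves $\D$: since $Cone(x_q[1])=Cone(x_q)[1]$, the shifted data $(T X,\Phi[1])$ again satisfies the fundamental relation and is an object of $\D$, and likewise for $T_\D^{-1}$. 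Rotating a distinguished triangle of $\D$ gives a triangle whose underlying triple is distinguished in $\D(\Gammahat)$ and all of whose vertices lie in $\D$; hence it is distinguished in $\D$, and the converse rotation is identical.

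For (TR3), given two distinguished triangles of $\D$ and a commuting square on their first two morphisms, I would use Lemma~\ref{l:cone}(2) to present both triangles as cone triangles of chain maps $F$ and $F'$. The square then induces the canonical map of mapping cones $Cone(F)\to Cone(F')$, which completes the morphism of triangles; by the naturality of the cone construction recorded in Lemma~\ref{l:edgecone} and in the computation of Lemma~\ref{l:cone}, this induced map automatically satisfies diagram~\ref{e:morphism}, so it is a morphism in $\D$. For (TR4), after the same reduction to composable chain maps, the octahedral axiom in $\D(\Gammahat)$ produces the three cones together with the octahedron's structure morphisms; Lemma~\ref{l:cone}(1) places each cone in $\D$, and each structure morphism is one of the canonical maps between these cones, hence $\Phi$-compatible by the same naturality. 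Thus the entire octahedron lives in $\D$.

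The hard part will be the reduction itself rather than any individual axiom: morphisms of $\D$ are morphisms of the derived category $\D(\Gammahat)$ (roofs of chain maps), while the $\Phi$-structures and Lemma~\ref{l:cone} are phrased with honest chain maps and honest mapping cones. The crux is therefore to replace, compatibly and within $\D$, each morphism that appears by a genuine chain map. This is legitimate because $Cone_e$ preserves quasi-isomorphisms (Lemma~\ref{l:edgecone}), so the fundamental relation --- and with it membership in $\D$ --- is stable under the quasi-isomorphic replacements used to rectify roofs; once every morphism in sight is a chain map, the naturality of $Cone_e$ makes all the structure maps $\Phi$-compatible for free. I expect (TR4) to demand the most bookkeeping, but it is structurally parallel to (TR3) and introduces no new phenomenon.
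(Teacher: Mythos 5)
Your proposal is correct and follows essentially the same strategy as the paper's proof: inherit the triangulated structure from $\D(\Gammahat)$, reduce via the standard fact that every distinguished triangle is isomorphic to the cone triangle of a chain map, and then invoke Lemma~\ref{l:cone} to see that cones of $\Phi$-compatible chain maps carry a canonical $\Phi$-structure, with the functoriality of $Cone_e$ handling the compatibility of the induced maps. The only (harmless) divergence is that you spell out explicitly that $T_\D$ preserves $\D$ for (TR2), which the paper dispatches as immediate from the definitions.
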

Following \cite{gelfman}, the notation T1, T2, T3, T4 for the axioms of a triangulated category will be used for simplicity. The reader can refer to Chapter IV of \cite{gelfman} for details.

\begin{proof}
Since triangles and morphisms have been defined above, only the axioms T1 $\to$ T4 remain to be verified. 

For T1 the only thing that needs to be checked is that a morphism $(X, \Phi) \to (Y, \Psi)$ can be completed to a triangle. First note that by construction, the objects and morphisms in $\D$ are objects and morphisms in the derived category $\D (\Gammahat)$ with extra structure.  So to show that a morphism can be completed in $\D$ it is enough to show that the completion in $\D (\Gammahat)$ carries the required extra structure. By construction of the derived category any morphism can be completed to a distinguished triangle which is isomorphic to a triangle of the form
$$ X \stackrel{F}{\to} Y \to Cone(F) \to X[1]$$
where $F: X \to Y$ is a morphism of complexes (see \cite{gelfman} Chapter 4 \S 2). If $(X, \Phi) ,(Y, \Psi) \in \D$ and $F$ sastifies Equation~\ref{e:morphism}, then Lemma~\ref{l:cone} implies that this completion lies in $\D$.

T2 follows by definition of triangles in $\D$.

For T3, consider triangles $X \to Y \to Z \to X[1]$, $X^{\prime} \to Y^{\prime} \to Z^{\prime} \to X^{\prime} [1]$ in $\D$,  where notation has been simplified by replacing $(X, \Phi)$ with $X$, etc. Given $F,G$ in the diagram below, we need to verify that there exists a morphism $H$ in $\D$ making the diagram commute.

$$\xymatrix{
X \ar[r]^{u} \ar[d]_{F} & Y \ar[r]^{v} \ar[d]_{G} & Z \ar[r]^{w} \ar[d]_{H} & X[1] \ar[d]_{F[1]} \\
X^{\prime} \ar[r]^{u^{\prime}} & Y^{\prime} \ar[r]^{v^{\prime}} & Z^{\prime} \ar[r]^{w^{\prime}} & X^{\prime}\\
}$$

Take roof diagrams $X \stackrel{r}{\leftarrow} X^{\prime \prime} \stackrel{f}{\to} X^{\prime}$, $Y \stackrel{s}{\leftarrow} Y^{\prime \prime} \stackrel{g}{\to} Y^{\prime}$ representing $F,G$ respectively. Then there is a map $u^{\prime \prime} : X^{\prime \prime} \to Y^{\prime \prime}$, which can be completed to a triangle $X^{\prime \prime} \to Y^{\prime \prime} \to Z^{\prime \prime} \to X^{\prime \prime}[1]$.

Using Lemma~\ref{l:cone}, take $Z,Z^{\prime}, Z^{\prime \prime}$ to be $Cone (u), Cone (u^{\prime}), Cone (u^{\prime \prime})$ and $u, u^{\prime}, u^{\prime \prime}$ to be maps of complexes. Then we have the following commutative diagram of complexes:

$$\xymatrix{
X \ar[r]^{u} & Y \ar[r] & Cone (u) \\
X^{\prime \prime} \ar[r]^{u^{\prime \prime}} \ar[d]_{f} \ar[u]^{r} & Y^{\prime \prime} \ar[r] \ar[d]_{g} \ar[u]^{s} & Cone (u^{\prime \prime}) \ar[d]_{f[1] \oplus g}  \ar[u]^{r[1] \oplus s} \\
X^{\prime} \ar[r]^{u^{\prime}} & Y^{\prime} \ar[r] & Cone (u^{\prime} )\\
}$$

Then the required morphism $Cone (u) \to Cone (u^{\prime})$ is represented by the third column of the commutative diagram, and since $f,g,r,s$ each satisfy Equation~\ref{e:morphism}, so do these maps. Hence the completion is in $\D$.

For T4 (Octahedron Axiom), again Lemma~\ref{l:cone} shows that by completing the diagram in $\D (\Gammahat)$, the completion lies in $\D$. To see this, suppose we are given the upper cap of an octahedron in $\D$, as pictured below. Here all objects and maps are in $\D$, and the top and bottom triangles are distinguished in $\D$.
$$\xymatrix{
X^{\prime} \ar[dd]_{[1]} \ar[dr]^{[1]} & & Z \ar[ll] \\
 & Y \ar[dl] \ar[ur]^{v} & \\
 Z^{\prime} \ar[rr]_{[1]} & & X \ar[ul]^{u} \ar[uu]_{u \circ v} \\
}$$

 Note that by completing the lower cap of the octahedron in $\D (\Gammahat)$, we get the following diagram, where the outer morphisms are in $\D$, the left and right triangles are distinguished (though not necessarily in $\D$), and the other two triangles commute.

$$\xymatrix{
X^{\prime} \ar[dd]_{[1]} & & Z \ar[ll] \ar[dl] \\
 & Y^{\prime} \ar[ul] \ar[dr]^{[1]} & \\
 Z^{\prime} \ar[ur] \ar[rr]_{[1]} & & X \ar[uu]_{u \circ v} \\
}$$
Since $X \stackrel{u\circ v}{\to} Z \to Y^{\prime}$ is distiguished, $Y^{\prime}$ is isomorphic to $Cone(u\circ v)$, and since $u\circ v$ is a morphism in $\D$, Lemma~\ref{l:cone} implies $Y^{\prime} \in \D$, and that the morphisms in the right triangle are in $\D$. Similarly, the left triangle is also in $\D$. Hence the lower cap can be completed to an octahedron, where all objects and morphisms lie in $\D$.
\end{proof}

From now on, we will abuse notation, and denote an object $(X, \Phi) \in \D$ simply by $X$ to simplify notation whenever possible.

\section{dg-Preprojective Algebra}\label{s:preproj}
In this section a graphical description of the ``derived preprojective algebra" is given. This is then related to the Koszul complex of the preprojective algebra. Later this will be used to construct projectives in the category $\Rep (\Gammahat)$ and to define indecomposable objects in the category $\D$. This algebra is known to experts, however the presentation given here is not readily available in the literature. 

To begin consider the following algebra $A$.
\begin{defi}\label{d:A}
Let $P$ be the path algebra of $\overline{\Gamma}$. Let $A$ be the algebra obtained by adjoining to $P$ generators $\{ l_i \}_{i\in \Gamma},$ with relations 
$$
l_ie_j=e_j l_i=\delta_{ij} l_i.
$$

\end{defi}
Thus, $A$ is generated by the expressions of the form 
\begin{equation}\label{e:gen_of_A}
p_1l_{i_1}p_2\dots p_k l_{i_k}p_{k+1}
\end{equation}
where each $p_a$ is a path from $i_a$ to $i_{a-1}$. 

The elements of $A$ are pictured as paths in $\Gammahat$ with jumps $(i,n)$ to $(i,n+2)$ for each $l_{i}$ that appears.

Extend the grading of $P$ (see Equation~\ref{e:decomp}) to $A$ by letting the $l_i$  be elements of degree 2, so that $l_i\in A_{i,i;2}$. This gives a decomposition of $A$:
\begin{equation}\label{e:A}
A=\bigoplus_{i,j\in \Gamma \ , \ k\in \Z_+} A_{i,j;k}
\end{equation}

The interesting fact is that $A$ has another grading, which is given by the number of jumps (i.e. the number of $l_i$ which appear in an expression). This gives a further decomposition
of $A$:
\begin{equation}\label{e:Agrading}
A=\bigoplus_{n\le 0}A^n=\bigoplus_{i,j\in \Gamma , \ k\in \Z_+, \ n\le 0}
A^n_{i,j;k}
\end{equation}
where $A^{-n}$ is the subspace in $A$ generated by expressions of the
form \eqref{e:gen_of_A} with exactly $n$ $l_i$'s. In particular, $A_{i,j;l}^{-n}$ can be thought of as paths from $i$ to $j$, of length $k$, with $n$ jumps.

The graded vector space $A = \bigoplus A^{n}$ is made into a complex by setting
\begin{equation}\label{e:d}
\begin{aligned}
d_{-k}\colon A^{-k}&\to A^{-k+1}\\
p_1l_{i_1}p_2\dots p_k l_{i_k}p_{k+1}&\mapsto 
\sum_{a=1}^{k} (-1)^{a+1} p_1l_{i_1}p_2\dots p_a\th_{i_a}p_{a+1}\dots  
p_k l_{i_k}p_{k+1}
\end{aligned}
\end{equation}
where $\theta_{i} \in A_{i,i;2}$ is given by 
\begin{equation}\label{e:mesh}
\theta_{i} =\sum_{j} \epsilon (e^{ij}) e^{ji} e^{ij} \in P_{i,i;2}
\end{equation}
where the sum is over all $j$ connected to $i$ in $\Gamma$, and $e^{ij}$ denotes the oriented edge from $i$ to $j$ in $\overline{\Gamma}$.
A routine calculation shows that this definition of $d$ and multiplication in $A$ make $A$ a dg-algebra.

This dg-algebra can be easily identified with the Koszul complex of the preprojective algebra $\Pi (\Gamma)$. Recall the description $\Pi (\Gamma) = T_{R} (V)/J$ given in Section~\ref{s:quivers}. The Koszul complex of $\Pi (\Gamma)$ is given by $$K_{\bullet} = T_{R}(V\oplus L) = \bigoplus V^{n_{1}} \otimes L \otimes V^{n_{2}} \otimes L \otimes \cdots L \otimes V^{n_{j}}$$ where $V^{n} = V^{\otimes n}$. (For details see \cite{etginz}.) The differential $d$ is given by 
\begin{equation}\label{e:Kdiff}
d(v_{1}\otimes l_{1} \otimes v_{2} \otimes \cdots \otimes l_{j-1} \otimes v_{j} ) = \sum_{i} (-1)^{i} v_{1}\otimes l_{1} \otimes v_{2} \otimes \cdots v_{i} \otimes j(l_{i}) \otimes v_{i+1} \otimes \cdots  \otimes l_{j-1} \otimes v_{j}
\end{equation}
where $v_{1}\otimes l_{1} \otimes v_{2} \otimes \cdots \otimes l_{j-1} \otimes v_{j} \in V^{n_{1}} \otimes L \otimes V^{n_{2}} \otimes L \otimes \cdots L \otimes V^{n_{j}}$.

To relate these two constructions, we simply identify an element $a_{k} \otimes a_{k-1} \otimes \cdots \otimes a_{1} \in V^{k}$ with the path $p=a_{k}a_{k-1} \cdots a_{1}$, and the element $j(l_{i})$ with $\theta_{i}$. It is easily verified that this identification is compatible with differentials and multiplication, which gives an identification $K_{\bullet} \simeq A^{\bullet}$ as dg-algebras. In particular, this identification gives a combinatorial picture of the Koszul complex of the preprojective algebra.

\begin{subsection}{The non-Dynkin case}
Consider the case where $\Gamma$ is non-Dynkin. 
It is known (see \cite{mv} ) that the preprojective algebra of a non-Dynkin graph is Koszul, and that the Koszul complex $K_{\bullet}$ gives a dg-algebra resolution of $\Pi$. These results, together with the identification $ K_{\bullet} \simeq A^{\bullet}$ gives the following result.

\begin{prop}
Suppose $\Gamma$ is non-Dynkin. Then 
$$H^{k} (A^{\bullet}) =\begin{cases} 
\Pi &\text{if} \ \ k=0 \\
0 &\text{if} \ \ k > 0 
\end{cases}$$
so the complex $A^{\bullet}$ gives a dg-algebra resolution of the preprojective algebra $\Pi$.
\end{prop}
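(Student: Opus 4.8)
The plan is to deduce the statement from two inputs already in place: the dg-algebra isomorphism $A^{\bullet}\simeq K_{\bullet}$ constructed just above, and the Koszulity of the preprojective algebra of a non-Dynkin graph recorded in \cite{mv}. Since cohomology is an invariant of a complex, it suffices to understand $H^{\bullet}(K_{\bullet})$ and transport the answer back along the identification; no further analysis of the combinatorial differential \eqref{e:d} is required beyond the $H^{0}$ term.

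First I would pin down $H^{0}$ directly, as this part is purely formal and uses nothing about the type of $\Gamma$. The complex is concentrated in non-positive degrees and terminates in $A^{0}=P$, the path algebra of $\overline{\Gamma}$, and the last differential $d_{-1}\colon A^{-1}\to A^{0}$ sends $p_{1}l_{i}p_{2}\mapsto p_{1}\theta_{i}p_{2}$. Its image is exactly the two-sided mesh ideal $J=\sum_{i}P\theta_{i}P$, so $H^{0}(A^{\bullet})=A^{0}/\operatorname{im} d_{-1}=P/J=\Pi$ by the definition of the preprojective algebra.

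The substantive content is the vanishing of the cohomology in the remaining degrees, and here the non-Dynkin hypothesis enters. By \cite{mv} the algebra $\Pi$ is Koszul, and for a Koszul algebra the bimodule Koszul complex $K_{\bullet}=T_{R}(V\oplus L)$ with differential \eqref{e:Kdiff} is acyclic away from degree $0$, i.e.\ it is a genuine resolution of $\Pi$ (see \cite{etginz}). Transporting this through $A^{\bullet}\simeq K_{\bullet}$ yields $H^{k}(A^{\bullet})=0$ for $k\neq 0$, which together with the computation of $H^{0}$ exhibits $A^{\bullet}$ as a dg-algebra resolution of $\Pi$.

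The only real obstacle is the acyclicity claim, which is precisely the Koszulity of $\Pi$ supplied by \cite{mv}; this is the one place where the non-Dynkin assumption is essential, since for Dynkin $\Gamma$ the algebra $\Pi$ is finite-dimensional and the complex $A^{\bullet}$ acquires extra cohomology — exactly the phenomenon exploited in the subsequent sections. Everything else, namely the $H^{0}$ computation and the compatibility of the two differentials, is routine once the dg-isomorphism $A^{\bullet}\simeq K_{\bullet}$ is granted.
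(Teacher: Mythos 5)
Your argument is correct and follows essentially the same route as the paper: the paper likewise deduces the result by combining the identification $A^{\bullet}\simeq K_{\bullet}$ with the Koszulity of $\Pi$ for non-Dynkin $\Gamma$ from \cite{mv}. Your explicit computation of $H^{0}$ via the image of $d_{-1}$ being the mesh ideal is a harmless elaboration the paper leaves implicit.
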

\end{subsection}

\section{Projective Representations of $\Gammahat$}\label{s:projreps}
Let $q\in \Gammahat$ be any vertex. Let $\Path^{k}(q,v)$ denote the space of ``paths with k jumps" from q to v. For $q=(i,n)$ and $v=(j,m)$, this space can be identified with the component $A_{i, j ; m-n}^{-k}$, defined by  Equation~\ref{e:Agrading}. 
\\
Define a representation $X_{q}^{k} \in \Rep (\Gammahat)$ by setting $X_{q}^{k} (v) = \Path^{k}(q,v)$. Composition of paths makes it a module over the path algebra $P$.
\begin{prop}\label{p:XHom}
\par\indent
\begin{enumerate}
\item For any $k$, and any vertex $q\in \Gammahat$, the representation $X_{q}^{k}$ is projective.
\item For any object $X\in \Rep (\Gammahat)$ we have $$\Hom_{\Gammahat} (X_{q}^{0}, X) = X(q).$$ 
\item For any object $X\in \Rep (\Gammahat)$ we have $$\Hom_{\Gammahat} (X_{q}^{k}, X) = \bigoplus_{v\in \Gammahat}  \Hom_{\mathbb{C}} ( X^{k-1} (q,v) , X(\tau v)).$$
\end{enumerate}
\end{prop}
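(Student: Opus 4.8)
The plan is to reduce all three statements to a single decomposition of $X_{q}^{k}$ into indecomposable projectives, obtained by cutting a path at its last jump. I would begin with (2), which is the base case. By definition $X_{q}^{0}(v) = \Path^{0}(q,v)$ is the space of ordinary paths (no jumps) from $q$ to $v$ in $\Gammahat$, so $X_{q}^{0}$ coincides with the indecomposable projective $P_{q}$ attached to the vertex $q$, which is projective by the standard fact recalled in Section~\ref{s:quivers}. Statement (2) is then the usual Yoneda-type isomorphism $\Hom_{\Gammahat}(X_{q}^{0}, X) \cong X(q)$ for representations of a quiver: a morphism $f$ is determined by the image $f(q)(e_{q}) \in X(q)$ of the trivial path $e_{q}$, and conversely any vector of $X(q)$ extends uniquely to a morphism via $f(v)(\gamma) = x_{\gamma}\bigl(f(q)(e_{q})\bigr)$ for a path $\gamma$ from $q$ to $v$, this being forced by compatibility with the edge maps.

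The main step is the decomposition
$$ X_{q}^{k} \;\cong\; \bigoplus_{v \in \Gammahat} \Path^{k-1}(q,v) \otimes_{\mathbb{C}} X_{\tau v}^{0} $$
as representations of $\Gammahat$, where each $\Path^{k-1}(q,v)$ is treated as a multiplicity vector space. To prove it, I would fix a target vertex $u$ and note that every path with $k$ jumps from $q$ to $u$ has a well-defined \emph{last} jump; recording the vertex $v$ immediately preceding that jump (so the jump is $v \to \tau v$) and splitting there gives a unique factorization into a path with $k-1$ jumps from $q$ to $v$, followed by the jump $v \to \tau v$, followed by an ordinary path from $\tau v$ to $u$. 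This yields the vector space identification $X_{q}^{k}(u) \cong \bigoplus_{v} \Path^{k-1}(q,v) \otimes \Path^{0}(\tau v, u) = \bigoplus_{v} \Path^{k-1}(q,v) \otimes X_{\tau v}^{0}(u)$. The point requiring care is that this is an isomorphism of representations, not merely of graded vector spaces: the edge maps of $X_{q}^{k}$ act by appending an edge at the target end $u$, which modifies only the final ordinary segment and leaves both $v$ and the $(k-1)$-jump prefix fixed, so the action factors through the $X_{\tau v}^{0}$-component. Hence the splitting is $P$-linear.

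Granting this decomposition, (1) and (3) follow immediately. For (1), $X_{q}^{k}$ is a direct sum of the projectives $X_{\tau v}^{0}$ (each appearing with multiplicity $\dim \Path^{k-1}(q,v)$), and an arbitrary direct sum of projectives is projective, so $X_{q}^{k}$ is projective. For (3), applying $\Hom_{\Gammahat}(-,X)$ and pulling out the multiplicity spaces, then invoking (2) with $\tau v$ in place of $q$, gives
$$ \Hom_{\Gammahat}(X_{q}^{k}, X) \cong \bigoplus_{v} \Hom_{\mathbb{C}}\bigl(\Path^{k-1}(q,v),\, \Hom_{\Gammahat}(X_{\tau v}^{0}, X)\bigr) = \bigoplus_{v} \Hom_{\mathbb{C}}\bigl(X^{k-1}(q,v),\, X(\tau v)\bigr), $$
which is the asserted formula.

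The main obstacle is bookkeeping rather than a deep idea: one must verify carefully that the last-jump splitting respects the representation structure (that appended edges act only on the final segment, so that the bijection of path sets is genuinely a morphism of $\Gammahat$-representations). A secondary point is the possibly infinite direct sum over $v$; here one notes that for fixed $q$ and $k$ each factor $\Path^{k-1}(q,v)$ is a finite-dimensional path space and that projectivity in (1) needs only that arbitrary coproducts of projectives are projective.
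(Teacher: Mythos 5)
Your argument is essentially the paper's: the paper proves (1) and (3) by observing that $X_{q}^{k}$ is freely generated over the path algebra by the elements $t_{k}p_{k-1}\cdots t_{1}p_{1}$ (paths ending in a jump), which is exactly your decomposition $X_{q}^{k}\cong\bigoplus_{v}\Path^{k-1}(q,v)\otimes X_{\tau v}^{0}$ obtained by cutting at the last jump, and (2) is the same Yoneda-type argument. Your write-up just makes the free-generation claim and its compatibility with the edge maps more explicit.
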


\begin{proof}
\par\indent
\begin{enumerate}
\item The space $X_{q}^{k}$ is freely generated over the path algebra $P$ by elements of the form $p=t_{k}p_{k-1} \cdots p_{2}t_{1}p_{1} $ where the $t_{i}$'s are jumps and the $p_{i}$'s are paths.
\item For any $x\in X$ define $\phi_{x} : X_{q}^{0} \to X$ by $\phi_{x} (p) = p.x$. This gives the required isomorphism.
\item First the isomorphism $\Hom_{\Gammahat} (X^{k}_{q} , X) \simeq \bigoplus_{v\in \Gammahat} X(2) \otimes (X^{k-1} (q,v))^{*} $ is established. This isomorphism is given by $$\phi \mapsto \bigoplus_{p\in X^{k-1}(q, v )} \phi (tp) \otimes p^{*}$$ with inverse $$x\otimes \psi_{p} \mapsto (p_{1} t p_{2} \stackrel {\phi_{x, \psi_{p}}} {\mapsto} p_{1} x \psi_{p} (p_{2})).$$ To see this, note that any element $\phi \in \Hom_{\Gammahat} (X_{q}^{k} , X)$ is determined by where it sends the generators $t_{i_{k}}p_{k-1} \cdots p_{2}t_{i_{1}}p_{1}$. So for each path $p:q\to v$ with $k-1$ jumps we need to assign an element $x\in \tau v$ which is the value of $\phi (tp)$.
\\
To establish the desired isomorphism, use the standard identification $W \otimes V^{*} \simeq \Hom(V,W)$ to obtain  $$\Hom_{\Gammahat} (X_{q}^{k}, X) = \bigoplus_{v\in \Gammahat}  \Hom_{\mathbb{C}} ( X^{k-1} (q,v) , X(\tau v)).$$
\end{enumerate}
\end{proof}

\section{Indecomposable Objects in $\D$}\label{s:indobj}
In this section the graded components of the dg-algebra $A$ defined in Section~\ref{s:preproj} are used to define objects in $\D$. To do this, the following preliminary result is required.

\begin{lemma}\label{l:determined}
Let $\hh$ be a height function, and let $\Gamma_{\hh}$ be the corresponding slice.
\begin{enumerate}
\item
An object $X \in \D$ is determined up to isomorphism by the collection $\{ X^{\bullet} (q) \}_{q\in \Gamma_{\hh}}$ and morphisms corresponding to edges in the slice $\Gamma_{\hh}$.
\item For any $X,Y \in \D$ a morphism $f\in \Hom_{\D} (X,Y)$ is determined by the collection $\{ f(q) \}_{q\in \Gamma_{\hh}}$.
\end{enumerate}
\end{lemma}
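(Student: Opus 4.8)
The plan is to exploit the \emph{fundamental relation} \eqref{e:fundamentalrelation}, which for every $q\in\Gammahat$ exhibits an exact triangle
$$X(q)\xrightarrow{x_q}\bigoplus_{q\to q'}X(q')\to X(\tau q)\to X(q)[1].$$
Read from left to right, $X(\tau q)$ is the cone of $x_q$, so $X(\tau q)$ together with the maps incident to $\tau q$ is determined by $X(q)$, by the $X(q')$ with $q\to q'$, and by the corresponding edge maps; read as a rotation, $X(q)$ is the fibre of $\bigoplus_{q\to q'}X(q')\to X(\tau q)$, hence is determined by its ``upper'' neighbours. Since a slice $\Gamma_\hh$ contains exactly one vertex $(i,\hh(i))$ in each $\tau$-orbit $\{(i,n)\st n\equiv p(i)\bmod 2\}$, and these orbits partition $\Gammahat$, it suffices to show that the slice data propagate through each orbit in both directions.

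First I would carry out the propagation in the order dictated by the reflection operations $s_i^{\pm}$ rather than by a naive height. If $i$ is a source of $\Om_\hh$, then every neighbour $q'$ of $q=(i,\hh(i))$ with $q\to q'$ is the slice vertex $(j,\hh(j))=(j,\hh(i)+1)$; thus the cone side of \eqref{e:fundamentalrelation} involves only slice data, and $X(\tau q)=X(i,\hh(i)+2)$ with its incident edges is reconstructed from $\Gamma_\hh$. This is exactly the passage from $\Gamma_\hh$ to $\Gamma_{s_i^+\hh}$; dually, at a sink the rotated triangle produces the descent to $\Gamma_{s_i^-\hh}$. Iterating $s_i^+$ along an admissible sequence of sources performs a full Coxeter sweep which, as one checks directly on the height functions, raises every value by $2$, i.e. sends $\Gamma_\hh$ to $\tau\,\Gamma_\hh$. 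Hence after $k$ sweeps $\hh(i)$ runs through all integers $\ge\hh(i)$ of the correct parity, and together with the $s_i^-$ sweeps the slices encountered exhaust $\bigcup_{k\in\Z}\tau^k\Gamma_\hh=\Gammahat$, each new vertex being computed exactly once from data already determined. To upgrade ``determine the data'' to ``up to isomorphism'' I would run the same induction on a given isomorphism $\{g_q\colon X(q)\xrightarrow{\sim}Y(q)\}_{q\in\Gamma_\hh}$ of slice data: at each step the morphism condition \eqref{e:morphism} forces $g_{\tau q}$ to be the map induced on cones by $g_q$ and the $g_{q'}$, which is again an isomorphism because $Cone$ preserves quasi-isomorphisms (Lemma~\ref{l:edgecone}). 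The resulting collection is an isomorphism $X\to Y$ in $\D(\Gammahat)$ which, by construction, commutes with the structure maps $\phi_q,\psi_q$, hence lies in $\D$.

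For the second statement I would use the very same propagation. The diagram \eqref{e:morphism} expresses, for each $q$, the component $f(\tau q)=\psi_q^{-1}\circ Cone(f(q),\bigoplus f(q'))\circ\phi_q$, so $f(\tau q)$ is determined by $f(q)$ and the $f(q')$ with $q\to q'$ (functoriality of the cone, Lemma~\ref{l:edgecone}); dually $f(q)$ is determined by its upper neighbours. Given two morphisms agreeing on $\Gamma_\hh$, their difference $f$ satisfies $f(q)=0$ for $q\in\Gamma_\hh$, and the same source/sink induction in reflection order yields $f(q)=0$ for all $q\in\Gammahat$. Thus the two morphisms coincide, and $f\in\Hom_\D(X,Y)$ is determined by $\{f(q)\}_{q\in\Gamma_\hh}$.

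The step I expect to be the main obstacle is organising the induction correctly. A naive induction on the ``level'' $n-\hh(i)$ of a vertex $(i,n)$ fails: computing $X(i,n)$ from \eqref{e:fundamentalrelation} requires the neighbours $(j,n-1)$, and when $\hh(j)=\hh(i)-1$ such a neighbour has the \emph{same} level, so the recursion does not descend. The remedy, and the technical heart of the argument, is to propagate strictly along the sequences $s_i^{\pm}$, where the source (resp. sink) condition guarantees that all neighbours feeding one cone (resp. cocone) already sit in the current slice. Verifying that an admissible sweep really realises $\tau$ (so the sweeps exhaust $\Gammahat$), and that the propagated isomorphisms and morphisms genuinely respect the $\phi_q$, are then the remaining routine checks.
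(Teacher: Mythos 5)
Your proposal is correct and follows essentially the same route as the paper: propagate the slice data through $\Gammahat$ by applying the fundamental relation at sources (and dually at sinks) in reflection order $s_i^{\pm}$, using the fixed cone identifications $\phi_q$ and the functoriality of $Cone$ to determine both objects and morphisms at $\tau^{\pm k}\Gamma_{\hh}$. Your additional remarks --- that a naive induction on level fails, and that the isomorphism must be propagated explicitly via the compatibility diagram --- are correct elaborations of steps the paper leaves implicit.
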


\begin{proof} 
\par\indent
\begin{enumerate}
\item
Let $q\in \Gamma_{\hh}$ be a source. Recall that by definition of objects in $\D$, $X$ satisfies the fundamental relations and comes with a fixed isomorphism $X(\tau q) \stackrel{\sim}{\to} Cone (x_{q})$, where $x_{q}$ is the map $X(q) \to \bigoplus_{q \to q^{\prime}} X( q^{\prime} )$. Hence the complex $X^{\bullet} (\tau q)$ is determined by $X(q)$ and $X(p)$ for $q \to p$ in $\Gammahat$ and morphisms corresponding to the edges joining them. (Noting that the $X(p)$ are in $\{ X^{\bullet} (q) \}_{q\in \Gamma_{\hh}}$ since $q$ is a source.) Write $q=(i,n)$ so that $i$ is a source in the quiver $(\Gamma , \Om_{\hh})$ determined by the height function $\hh$. Apply the reflection $s_{i}$ and consider a source in $q^{\prime} \in \Gamma_{s_{i} \hh}$. Then repeating the argument above and noting that $X(q^{\prime})$ is in the collection 
 $\{ X^{\bullet} (q) \}_{q\in \Gamma_{\hh}}$, and that $X^{\bullet} (p)$ for $q^{\prime} \to p$ is in the collection  $\{ X^{\bullet} (q) \}_{q\in \Gamma_{h}} \bigcup X^{\bullet} (\tau q)$, one sees that $X^{\bullet} (\tau q^{\prime} )$ is determined. Continuing in this way it follows that for any $p = \tau^{k} q$ for $q \in \Gamma_{\hh}$ the complex $X^{\bullet} (p)$ is determined by the collection  $\{ X^{\bullet} (q) \}_{q\in \Gamma_{\hh}}$.
 
 A similar argument for $q \in \Gamma_{\hh}$ a sink can be repeated. This shows that for any $p = \tau^{-k} (q)$ with $q\in \Gamma_{\hh}$  the complex $X^{\bullet} (p)$ is determined by the collection  $\{ X^{\bullet} (q) \}_{q\in \Gamma_{\hh}}$.

\item Suppose that $F(q) : X(q) \to Y(q)$ is given for all $q \in \Gamma_{\hh}$. Take $q\in \Gamma_{\hh}$ to be a source, so that for any edge $q \to q^{\prime}$ in $\Gammahat$ , $q^{\prime}$ belongs to the slice $\Gamma_{\hh}$. Then using the isomorphisms $X( \tau q) \stackrel{\sim}{\to} Cone (x_{q})$ and $Y( \tau q) \stackrel{\sim}{\to} Cone(y_{q})$, together with the functoriality of ``cone over an edge", the following diagram has a unique completion $Cone (F (\tau q))$ making it commutative, which extends $F$ to $\tau q$.
$$
\xymatrix{
X (\tau q) \ar[d] \ar[r]^{F (\tau q)} & Y ( \tau q) \ar[d] \\
Cone (x_{q} ) \ \ \ \ar[r]^{Cone( F( \tau q) ) } & \ \ \ Cone(y_{q}) \\
}$$
Continuing in this way (and using a similiar argument for $q$ a sink) it is possible to extend $F$ to all vertices in $\Gammahat$. 
\end{enumerate} 
\end{proof}

Using the components of the dg-algebra $A$ defined in Section~\ref{s:preproj} define, for each vertex $q\in \Gammahat$, an object $X_{q}^{\bullet} \in \D$ as follows:
For $q=(i,n)$ and $v=(j,m)$ and $n \leq m$ set
 $$X_{q}^{k}(v) = A_{i, j ; m-n}^{-k}$$ where $A_{i, j ; m-n}^{-k}$ is defined by  ~\ref{e:Agrading}.  For $n > m$ notice that Lemma~\ref{l:determined} implies that this is adequate to extend to all other vertices. It remains to check this does, in fact, define an object in $\D$.

\begin{prop}\label{p:XinD}
For $q \in \Gammahat$ there is a canonical isomorphism (up to choice of function $\epsilon$) $X_{q}(\tau v) \simeq \text{Cone} (X_{q}(v) \to \oplus_{v \to v^{\prime}} X_{q} (v^{\prime}))$ and hence $X_{q} \in \D$.
\end{prop}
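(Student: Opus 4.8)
The plan is to verify the defining cone isomorphism directly on the region where $X_q$ is given by the explicit formula, and to note that on the remaining vertices it holds by construction. Fix $q=(i,n)$ and a vertex $v=(j,m)$ with $m\ge n$, so that $\tau v=(j,m+2)$ and the edges $v\to v'$ of $\Gammahat$ go to the vertices $v'=(j',m+1)$ with $j'$ adjacent to $j$. By definition $X_q^{k}(v)=A^{-k}_{i,j;m-n}$, with internal differential $d$ of \eqref{e:d}, and the structure map $x_e\colon X_q(v)\to X_q(v')$ attached to an edge $e\colon v\to v'$ is left multiplication by the edge $e^{jj'}$ of $\overline{\Gamma}$. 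Concretely, then, in cohomological degree $-k$ the cone is
\[
\mathrm{Cone}(x_v)^{-k}=\Big(\bigoplus_{j'\sim j}A^{-k}_{i,j';m+1-n}\Big)\oplus A^{-(k-1)}_{i,j;m-n},
\]
and the task is to match this, as a complex, with $X_q^{k}(\tau v)=A^{-k}_{i,j;m+2-n}$. For $m<n$ the values of $X_q$ were defined precisely so as to satisfy the fundamental relation via Lemma~\ref{l:determined}, so there the isomorphism is automatic; only the case $m\ge n$ needs an argument.

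First I would produce the underlying graded isomorphism. Reading a generator of $A^{-k}_{i,j;m+2-n}$ from its target end $j$, its leftmost factor is either a genuine edge $e^{j'j}$ of $\overline\Gamma$ arriving at $j$ (for some $j'\sim j$) or a jump $l_j$. This yields a canonical splitting
\[
A^{-k}_{i,j;m+2-n}=\Big(\bigoplus_{j'\sim j}e^{j'j}\,A^{-k}_{i,j';m+1-n}\Big)\ \oplus\ l_j\,A^{-(k-1)}_{i,j;m-n},
\]
which matches $\mathrm{Cone}(x_v)^{-k}$ term by term: the edge summands with $\bigoplus_{v\to v'}X_q(v')$ and the jump summand with the shift $X_q(v)[1]$ via $l_j u\leftrightarrow u$. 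I then define the candidate map $\phi_v\colon X_q(\tau v)\to\mathrm{Cone}(x_v)$ by this splitting, inserting the sign $\epsilon(e^{jj'})$ on the $j'$-th edge component; this is exactly the freedom recorded in the statement as ``up to the choice of $\epsilon$''.

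The heart of the proof is checking that $\phi_v$ is a map of complexes, i.e. that $d$ on $A^{-\bullet}_{i,j;m+2-n}$ becomes the cone differential, which is $+d$ on $\bigoplus_{v\to v'}X_q(v')$, is $-d$ on $X_q(v)[1]$, and has off-diagonal component $x_v$. On an edge-led generator $e^{j'j}w$ the leftmost factor is not a jump, so the jump positions are unchanged and $d(e^{j'j}w)=e^{j'j}\,d(w)$ with unchanged signs; this is the diagonal $+d$ on the summand $X_q(v')$, with no component in the jump summand. On a jump-led generator $l_j u$, with $u\in A^{-(k-1)}_{i,j;m-n}$, the $a=1$ term of \eqref{e:d} replaces $l_j$ by $\theta_j$, while the terms $a\ge2$ act inside $u$ with an overall extra sign, giving
\[
d(l_j u)=\theta_j\,u-l_j\,d(u).
\]
Using \eqref{e:mesh}, one has $\theta_j\,u=\sum_{j'\sim j}\epsilon(e^{jj'})\,e^{j'j}\,(e^{jj'}u)=\sum_{j'\sim j}\epsilon(e^{jj'})\,e^{j'j}\,x_{e^{jj'}}(u)$, which lands in the edge summands and reproduces precisely the off-diagonal map $x_v$ once the $\epsilon$-signs are absorbed into $\phi_v$, while the remaining term $-l_j\,d(u)$ is the shifted differential on $X_q(v)[1]$. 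This identity $\theta_j u=\sum_{j'}\epsilon(e^{jj'})e^{j'j}x_{e^{jj'}}(u)$ — the statement that the mesh relation of the preprojective algebra is exactly the connecting map of the cone — is the one genuinely delicate point; everything else is sign bookkeeping.

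Finally, $\phi_v$ is bijective in each degree by the splitting, hence an isomorphism of complexes $X_q(\tau v)\cong\mathrm{Cone}(x_v)$, and Lemma~\ref{l:edgecone} gives its naturality in $v$. To conclude $(X_q,\Phi)\in\D$ I would also check the commuting triangle of the definition: the composite $\bigoplus_{v\to v'}X_q(v')\xrightarrow{\sum x_{v'\to\tau v}}X_q(\tau v)\xrightarrow{\phi_v}\mathrm{Cone}(x_v)$ equals the canonical inclusion, which is immediate because $x_{v'\to\tau v}$ is again left multiplication by $e^{j'j}$ and so realizes the edge summands of the splitting, carrying the same $\epsilon$-signs. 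Propagating from a slice inside the region $m\ge n$ then shows the extended values for $m<n$ are consistent, and together these establish $X_q\in\D$.
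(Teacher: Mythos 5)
Your proof is correct and follows essentially the same route as the paper: both rest on the decomposition of a path with $k$ jumps ending at $\tau v$ according to whether its last step is an edge $\overline{e}$ or the jump $l_j$, and both verify compatibility of differentials by observing that the $a=1$ term of the differential \eqref{e:d} turns the leading jump into the mesh element $\theta_j$, which is exactly the connecting map of the cone once the signs $\epsilon(e)$ are built into the identification. The only cosmetic differences are that you write the isomorphism in the direction $X_q(\tau v)\to \mathrm{Cone}(x_v)$ rather than its inverse, and you explicitly check the commuting triangle from the definition of $\D$, which the paper leaves implicit.
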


\begin{proof}
Let $v=(i,n) \in \Gammahat$. For any edge $e: v\to v^{\prime}$ in $\Gammahat$, denote by $\overline{e}$ the corresponding edge $v^{\prime} \to \tau v$. Define the map $\phi _{v} : Cone (x_{v}) \to X (\tau v)$ by
\begin{equation}
\phi_{v} (x,y) = t_{i}x + \sum_{e: s(e)=q} \epsilon(e) \overline{e} y
\end{equation}
where $x \in X_{q}^{\bullet +1} (v)$ and $y \in \bigoplus_{e: v \to v^{\prime}} X_{q} (v^{\prime})$. Note that the choice of sign $\epsilon (e)$ is forced by requiring that this map agree with the differentials:
\begin{align*}
\phi_{v} (d_{C} (x,y)) &= \phi_{v} (d_{X_{q}} x , (-1)^{k} x_{v}x + d_{X_{q}} y) \\
&= t_{i}d_{X_{q}} (x) + (-1)^{k} \sum_{e : s(e)=v}  \epsilon (e) \overline{e} x_{v} (x) + \epsilon (e) \overline{e} d_{X_{q}}(y) \\
&= t_{i}d_{X_{q}} (x) + (-1)^{k} \sum_{e : s(e)=v}  \epsilon (e) \overline{e} ex + \epsilon (e) \overline{e} d_{X_{q}}(y) \\
&= t_{i} d_{X_{q}} (x) + (-1)^{k} \theta_{i}x +  \epsilon (e) \overline{e} y \\
&= d_{X_{q}} (t_{i} x + \epsilon (e) \overline{e} y ) \\
&= d_{X_{q}} (\phi_{v} (x,y) )
\end{align*}
where $(x,y) \in Cone^{k} (x_{v})$ and $d_{C}$ denotes the differential on $Cone (x_{v})$.

Since paths with jumps form a basis and any path $q \to \tau v$ with $k$ jumps is either a path $p:q \to v^{\prime}$ with k jumps followed by the edge $\overline{e} : v^{\prime} \to \tau v$, or is a path $p:q\to v$ with $k-1$ jumps followed by the jump $t_{i}$, the above map gives an isomorphism of complexes.
\end{proof}

\begin{remark}
Although the object $X_{q}$ requires the choice of signs $\epsilon (e)$, different choices result in isomorphic objects. In particular, the category $\D$ does not depend on such a choice. The choice of $\epsilon$ amounts to choosing a representative of the isomorphism class of indecomposable object $[X_{q}]$ corresponding to $q\in \Gammahat$.
\end{remark}

Alternatively, for $i\neq j$ let $q=(i,n)$, then for $p=(j,n)$ set $X_{q} (p) = 0$. Now let $p=(j,n+1)$ and $n_{ij} =1$ so that $q\to p$ in $\Gammahat$. We define $X_{q}^{\bullet} (p) := \Path_{\Gammahat} (q,p)$ where by this we mean a complex with $\Path$ in degree 0, and $0$ in all other degrees. Note that by Lemma~\ref{l:determined} this is sufficient to extend to all other vertices using the fundamental relation. Note that it is clear from this definition that $X_{q}$ is indecomposable.

\section{Some results about $\Hom$ in $\D$}\label{s:homD}
In this section we give some results which will be useful in future sections.

\begin{thm}\label{t:RHom}
\par\indent
\begin{enumerate}
\item Let $Y\in \D$, and let $q \in \Gammahat$. Then there is an isomorphism $\RHom (X_{q}, Y) = Y(q)$
\item Let $q=(j,n)$, $q^{\prime} = (i,m)$, then $\RHom (X_{q} , X_{q^{\prime}}) = A_{i,j;n-m}$.
\item $\Hom (X_{q} , X_{q^{\prime}}) = \Path_{\Gammahat} (q^{\prime} , q) / J$ where $J$ is the mesh ideal, generated by the mesh relations (see Equation~\ref{e:mesh}).
\item Let $\hh$ be a height function, and $\Gamma_{\hh}$ the corresponding slice. If $q,q^{\prime} \in \Gamma_{\hh}$ then $\Ext^{i} (X_{q} , X_{q^{\prime}}) = 0$ for $i>0$.
\end{enumerate}
\end{thm}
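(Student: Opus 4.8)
The plan is to establish the four statements in order, since each feeds into the next. For (1), I would compute $\RHom(X_q, Y)$ by using that $X_q$ is built out of the projective representations $X_q^k$ of Proposition~\ref{p:XHom}. The object $X_q^\bullet \in \D$ is, as a complex of representations of $\Gammahat$, precisely the total complex whose degree $-k$ part is the projective $X_q^k$, with differential induced by the mesh elements $\theta_i$ as in Equation~\eqref{e:d}. Since each $X_q^k$ is projective (Proposition~\ref{p:XHom}(1)), $\Hom_{\Gammahat}(X_q^\bullet, Y)$ already computes $\RHom$, so no projective replacement is needed. Then Proposition~\ref{p:XHom}(2),(3) give $\Hom_{\Gammahat}(X_q^k, Y)$ termwise, and I would assemble these into a complex whose total cohomology I claim collapses to $Y(q)$ in degree $0$. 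Concretely, $\Hom_{\Gammahat}(X_q^0, Y) = Y(q)$ and the higher terms $\Hom_{\Gammahat}(X_q^k, Y) = \bigoplus_v \Hom_\CC(X^{k-1}(q,v), Y(\tau v))$ should cancel against each other via the fundamental relation Equation~\eqref{e:fundamentalrelation}: the exact triangles defining objects of $\D$ force the contribution of each jump to be acyclic away from degree $0$. This is the conceptual heart of part (1).

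For (2), I would simply apply (1) with $Y = X_{q'}$, giving $\RHom(X_q, X_{q'}) = X_{q'}(q)$. Unwinding the definition of $X_{q'}$ from Section~\ref{s:indobj}, for $q' = (i,m)$ and $q = (j,n)$ we have $X_{q'}(q) = \bigoplus_k A^{-k}_{i,j;n-m}$, which is exactly the total component $A_{i,j;n-m}$ of the dg-algebra $A$, viewed as a complex under $d$. So (2) is essentially a bookkeeping identification of the $\RHom$-complex with a graded piece of $A$, using the isomorphism $K_\bullet \simeq A^\bullet$ from Section~\ref{s:preproj}. For (3), I would take $H^0$ of the complex in (2): since $\Hom$ is the degree-$0$ cohomology of $\RHom$, and the identification $A \simeq K_\bullet$ realizes $A$ as the Koszul complex of $\Pi(\Gamma)$, the degree-$0$ cohomology of $A_{i,j;n-m}$ is the corresponding graded piece of $\Pi(\Gamma) = P(\overline{\Gamma})/J$, namely $\Path_{\Gammahat}(q', q)/J$ with $J$ the mesh ideal of Equation~\eqref{e:mesh}. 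The orientation of the path (from $q'$ to $q$ rather than $q$ to $q'$) comes from the contravariance in the first argument of $\Hom$.

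For (4), I would argue that when $q, q' \in \Gamma_{\hh}$ lie on a common slice, the graded piece $A_{i,j;n-m}$ is concentrated so that no positive-degree cohomology survives. Since a slice corresponds to an orientation $\Om_\hh$, and $X_q$, $X_{q'}$ correspond under the equivalence with $\D^b(\Rep(\Gamma,\Om_\hh))$ to genuine (shifted-to-degree-$0$) representations with $q, q'$ on the slice, the vanishing $\Ext^i(X_q, X_{q'}) = 0$ for $i > 0$ should reduce to the hereditary property of $\Rep(\Gamma,\Om_\hh)$ together with the height constraint $|\hh(i) - \hh(j)| \le \dist_\Gamma(i,j)$, which bounds path lengths and kills higher jumps. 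Alternatively, and more self-containedly, I would show directly that for $q,q'$ on a slice the Koszul differential on $A_{i,j;n-m}$ has no higher cohomology because the relevant paths have too few jumps to contribute in positive degree.

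The main obstacle I expect is part (1): proving that the termwise $\Hom$-complex $\Hom_{\Gammahat}(X_q^\bullet, Y)$ is quasi-isomorphic to $Y(q)$ concentrated in degree $0$. This requires identifying its differential explicitly via Proposition~\ref{p:XHom}(3) and showing the higher terms form an acyclic complex, which is where the exact-triangle structure of objects in $\D$ (the fundamental relation) must be invoked carefully rather than just formally. Once (1) is secured, parts (2)--(4) are largely translations into the combinatorics of the dg-algebra $A$ and its Koszul interpretation.
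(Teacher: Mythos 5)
Your handling of parts (2)--(4) matches the paper's proof almost step for step: (2) is part (1) applied to $Y=X_{q'}$ together with the definition $X_{q'}(q)=A_{i,j;n-m}$, (3) is $H^0$ of that complex, and your ``self-contained'' version of (4) --- that two vertices of a common slice admit no connecting paths with jumps, so $X_{q'}(q)$ is concentrated in degree $0$ --- is exactly the argument the paper gives (the detour through the equivalence with $\D^b(\Gamma,\Om_{\hh})$ is an unnecessary appeal to a result proved only in the following section). The real divergence is in part (1). The paper does \emph{not} compute the Hom-complex out of the projective resolution $\cdots\to X_q^1\to X_q^0$. Instead it chooses a slice $\Gamma_{\hh}$ through $q$, invokes Lemma~\ref{l:determined}(2) to reduce $\RHom_{\D}(X_q,Y)$ to its restriction to that slice, observes that $X_q$ restricted to $\Gamma_{\hh}$ is concentrated in degree $0$ (no path from $q$ to a vertex of the same slice can contain a jump), and concludes directly from Proposition~\ref{p:XHom}(2). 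This sidesteps entirely the acyclicity question that your route runs into.

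That acyclicity question is a genuine gap in your plan, and you have correctly identified it as the heart of the matter without resolving it. Two points. First, as stated your claim is not quite the right one: the assertion cannot be that the total Hom-complex ``collapses to $Y(q)$ in degree $0$,'' since $Y(q)$ is itself a complex; and already for $Y=X_{q'}$ the answer $A_{i,j;n-m}$ has cohomology in arbitrarily high degrees by Theorem~\ref{t:periodic} --- this is precisely the failure of Koszulity of $\Pi(\Gamma)$ in the Dynkin case, so ``the contribution of each jump is acyclic away from degree $0$'' is false if read literally. The correct claim is that the subcomplex $C_{\geq 1}=\bigoplus_{k\geq 1}\Hom_{\Gammahat}(X_q^k,Y^{\bullet})$ of the total Hom-complex is acyclic, so that the projection onto the quotient $\Hom_{\Gammahat}(X_q^0,Y^{\bullet})=Y(q)$ is a quasi-isomorphism. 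Second, proving that $C_{\geq 1}$ is acyclic requires actually deploying the fundamental relation of $Y$: via Proposition~\ref{p:XHom}(3) each summand of $\Hom_{\Gammahat}(X_q^k,Y)$ involves $Y(\tau v)$, and one must use the fixed isomorphisms $Y(\tau v)\simeq \mathrm{Cone}(y_v)$ to contract these terms against their neighbours, checking compatibility with both the Koszul differential of $X_q^{\bullet}$ and the internal differential of $Y$. None of this is supplied, and it is not a formality. So either carry out that contraction explicitly, or adopt the paper's slice-restriction argument, which replaces the whole computation with the single observation that $X_q|_{\Gamma_{\hh}}$ is a projective in degree $0$.
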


\begin{proof}
\par\indent
\begin{enumerate}
\item Let $\Gamma_{\hh}$ be a slice through $q$. By Lemma~\ref{l:determined} Part 2, $\RHom_{\D} (X_{q} , Y)$ is determined on the slice $\Gamma_{\hh}$. On the slice $\Gamma_{\hh}$ the object $X_{q}$ is concentrated in degree 0 so we can identify $\RHom (X_{q} , Y)$ with $Y(q)$ by definition of $\RHom$ and Proposition~\ref{p:XHom} Part 2.
\item By Part 1 we have $\RHom (X_{q} , X_{q^{\prime}}) = X_{q^{\prime}} (q) = A_{i,j;n-m}$.
\item By Part 1 we have $$\Hom (X_q , X_{q^{\prime}}) = H^{0} (X_{q^{\prime}} (q)) = \Path (q^{\prime} ,q) /J.$$
\item By Part 1 we have that $\Ext^{k} (X_q , X_{q^{\prime}} ) = \Path^{k} (q^{\prime} , q) /J$. However if $q,q^{\prime} \in \Gamma_{\hh}$ then there are no paths with jumps $q^{\prime} \to q$, in other words the complex $X_{q^{\prime}} (q)$ is concentrated in degree 0.
\end{enumerate}
\end{proof}

\section{Equivalence of Categories}\label{s:equiv}
In this section, for every height function $\hh : \Gamma \to \Z$ an equivalence of triangulated categories $R \rho_{\hh} :\D \to \D^{b} (\Gamma, \Om_{\hh})$ is constructed and shown to be compatible with the reflection functors.\\

\begin{remark}
Note that the equivalence of categories given by a height function here, is between the category $\D$ and $\D^{b} ( \Gamma , \Om_{\hh})$, whereas in the case of equivariant sheaves on $\PP^{1}$, considered in \cite{kirillov}, the equivalence is between $\D$ and $\D^{b} ( \Gamma , \Om_{\hh}^{op})$ and is given by constructing a tilting object. That can also be done here, however that is not the approach taken.
\end{remark}

Recall that any height function $\hh$ determines an orientation $\Om_{\hh}$, and that the corresponding slice $\Gamma_{\hh}$ is an embedding of the quiver $(\Gamma , \Om_{\hh})$ in $\Gammahat$. So any representation of $\Gammahat$ gives a representation of $(\Gamma , \Om_{\hh})$ by restriction to the slice. So there is a restriction functor $\rho_{\hh} : \Rep(\Gammahat) \to \Rep (\Gamma, \Om_{\hh})$ defined by 
\begin{equation}\label{e:rho}
\rho_{\hh} (X) = \bigoplus_{q\in \Gamma_{\hh}} X(q).
\end{equation} 
Notice that this functor is exact. Denote by $R \rho_{\hh} :\D \to \D^{b} (\Gamma , \Om_{\hh})$ the corresponding derived functor.

\begin{thm}\label{t:equivalence}
Let $\hh$ be a height function, and let $\Gamma_{\hh}$ be the corresponding slice. Then the functor $R \rho_{\hh} :\D \to \D^{b} (\Gamma , \Om_{\hh})$ is an equivalence of triangulated categories.
\end{thm}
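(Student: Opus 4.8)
The plan is to exhibit $R\rho_{\hh}$ as an equivalence by checking it is fully faithful and essentially surjective, exploiting the fact that both categories are triangulated and that $\D$ is controlled on a single slice. The key structural input is Lemma~\ref{l:determined}: every object of $\D$ is determined by its restriction to $\Gamma_{\hh}$, and every morphism is determined by its values on $\Gamma_{\hh}$. Since $\rho_{\hh}$ is exactly this restriction to the slice, this strongly suggests that $R\rho_{\hh}$ loses no information. First I would observe that $R\rho_{\hh}$ is an exact functor of triangulated categories (this is automatic, as $\rho_{\hh}$ is exact and commutes with cones, mapping a distinguished triangle in $\D$ to a distinguished triangle in $\D^b(\Gamma,\Om_{\hh})$ by the explicit description of triangles in Lemma~\ref{l:cone}). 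It therefore suffices to prove full faithfulness together with the fact that the image generates the target.

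For full faithfulness, the natural approach is to reduce to the indecomposable generators $X_q$ and compute both sides. By Theorem~\ref{t:RHom} Part 1, $\RHom_{\D}(X_q, Y) = Y(q)$; in particular, for $q,q'\in\Gamma_{\hh}$, Part 4 gives $\Ext^i(X_q,X_{q'})=0$ for $i>0$, so on the slice the objects $X_q$ behave like projective generators. I would check that under $R\rho_{\hh}$ the object $X_q$ for $q=(i,\hh(i))\in\Gamma_{\hh}$ is sent to the indecomposable projective $P_i\in\D^b(\Gamma,\Om_{\hh})$; this is plausible because $X_q$ restricted to the slice records exactly the paths from $i$ in the orientation $\Om_{\hh}$, matching the construction $P_i(j)=P_{i,j}$. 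Granting this, one compares $\RHom_{\D}(X_q,X_{q'})$ with $\RHom_{\D^b}(P_i,P_{i'})$: the former is $X_{q'}(q)=A_{\cdots}$ by Theorem~\ref{t:RHom} Part 2, and the latter is computed by paths in $(\Gamma,\Om_{\hh})$ modulo relations. The identification of these two $\Hom$-spaces, compatibly with composition, gives full faithfulness on the generators, and hence on all of $\D$ by the standard dévissage argument using triangles (every object is built from the $X_q$).

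For essential surjectivity, I would note that the indecomposable projectives $P_i$, $i\in\Gamma$, together with the shift functor generate $\D^b(\Gamma,\Om_{\hh})$ as a triangulated category, and each $P_i$ lies in the image by the identification above; since $R\rho_{\hh}$ is exact and its essential image is a triangulated subcategory containing a generating set, it is everything. The main obstacle I expect is the careful verification that $R\rho_{\hh}(X_q)\cong P_i$ \emph{as complexes with the correct maps}, i.e.\ that the restriction to the slice really does recover the projective representation with its path-algebra module structure, and that this identification is natural in $q$ so that composition of morphisms is respected. This requires unwinding the definition of $X_q$ via the components $A^{-k}_{i,j;m-n}$ of the dg-algebra and checking that, along the slice where no jumps occur, one recovers honest paths in $(\Gamma,\Om_{\hh})$. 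A secondary subtlety is confirming that the derived functor $R\rho_{\hh}$ of the exact functor $\rho_{\hh}$ is computed correctly on objects of $\D$, which are genuine complexes of representations of $\Gammahat$ rather than formal objects; but since $\rho_{\hh}$ is exact this should reduce to applying $\rho_{\hh}$ termwise, and the compatibility with the triangulated structure follows from Lemma~\ref{l:cone}.
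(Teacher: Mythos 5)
Your overall strategy (fully faithful plus essentially surjective, reduced to the generators $X_q$) is a genuinely different route from the paper's, which is much more direct: the paper simply observes that Lemma~\ref{l:determined} already says restriction to the slice $\Gamma_{\hh}$ is bijective on morphisms (Part 2 gives full faithfulness for arbitrary $X,Y\in\D$ in one step) and that any $Y\in\D^{b}(\Gamma,\Om_{\hh})$, placed on the slice, propagates to an object of $\D$ via the cone construction in the proof of Part 1 (essential surjectivity). No identification of $R\rho_{\hh}(X_q)$ with $P_i$ and no d\'evissage is needed. Your preliminary observations are sound, and the obstacle you flag about $R\rho_{\hh}(X_q)\cong P_i$ is resolvable: a path in $\Gammahat$ from $(i,\hh(i))$ to $(j,\hh(j))$ has length $\hh(j)-\hh(i)$ while each edge raises the second coordinate by exactly $1$, so any such path is forced to be height-monotone and therefore projects to a directed path in $\Om_{\hh}$; hence $\rho_{\hh}(X_q)(j)=P_{i,j}$ as desired.

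The genuine gap is in your full-faithfulness step. The d\'evissage from the generators to all of $\D$ requires knowing that every object of $\D$ lies in the triangulated subcategory generated by $\{X_q\}_{q\in\Gamma_{\hh}}$ and their shifts. At this point in the development that is not available: the statement that the $X_q$ exhaust the indecomposables of $\D$ is Corollary~\ref{c:ARquiver}, which is deduced \emph{from} Theorem~\ref{t:equivalence}, so invoking it here is circular, and nothing proved so far rules out that $\D$ contains ``extra'' objects invisible to the generators. The clean fix is exactly the paper's: use Lemma~\ref{l:determined} Part 2 to obtain $\Hom_{\D}(X,Y)\simeq\Hom_{\D^{b}(\Gamma,\Om_{\hh})}(R\rho_{\hh}X,R\rho_{\hh}Y)$ for \emph{all} $X,Y\in\D$ directly, after which your essential-surjectivity argument (or the paper's explicit construction of a preimage on the slice) closes the proof. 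With that reordering your argument goes through, but as written the reduction to generators is unjustified.
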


\begin{proof}
Note that a height function $\hh$ gives a lifting of the quiver $(\Gamma , \Om_{\hh} )$ to $\Gammahat$ and that the image of $( \Gamma , \Om_{\hh} )$ is the slice $\Gamma_{\hh}$. Hence for any object $Y \in \D^{b} (\Gamma , \Om_{\hh})$ define an object in $\D$ as follows.

For $i \in \Gamma$ and $q=(i, \hh (i)) \in \Gammahat$ define $X(q) = Y(i)$ and for each edge $e:q \to q^{\prime} \in \Gamma_{\hh}$ define maps $x_{e} : X(q) \to X(q^{\prime})$ by $x_{e} = y_{e}$ where $y_{e} : Y(i) \to Y(j)$ and $q^{\prime} = (j, \hh (j))$. Then Part 1 of Lemma~\ref{l:determined} shows this determines an object $X \in \D$. Hence for every $Y \in \D^{b} (\Gamma , \Om_{\hh} )$ there exists $X \in \D$ such that $R \rho_{\hh} (X) = Y$. Note that Part 2 of Lemma~\ref{l:determined} implies that for any $X,Y \in \D$ there is an isomorphism $\Hom_{\D} (X,Y) \simeq \Hom_{\D^{b} (\Gamma , \Om_{\hh})} (R \rho_{\hh} X , R \rho_{\hh} Y)$. Together, this shows that $R \rho_{\hh}$ is an equivalence, and since $R \rho_{\hh}$ is the derived functor of an exact functor it is a triangle functor.
\end{proof}

\begin{cor}\label{c:ARquiver}
Let $\Gamma$ be Dynkin. The Auslander-Reiten quiver of $\D$ is $\Gammahat^{op}$, so the objects $X_{q}$ form a complete list of indecomposable objects in $\D$.
\end{cor}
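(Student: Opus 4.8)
The plan is to combine the equivalence of Theorem~\ref{t:equivalence} with the $\Hom$-computation of Theorem~\ref{t:RHom}: the former pins down the \emph{vertices} of the Auslander--Reiten quiver of $\D$ (that the $X_q$ form a complete list of indecomposables), while the latter pins down the \emph{arrows} together with their orientation. The appearance of $\Gammahat^{op}$ rather than $\Gammahat$ will come entirely from the path-reversal $\Hom(X_q,X_{q'})=\Path(q',q)/J$ recorded in Theorem~\ref{t:RHom}(3), so the orientation step is where the content of the statement really lives.

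First I would settle the vertex set, i.e. that $q\mapsto[X_q]$ is a bijection from $\Gammahat$ onto $\Ind(\D)$. For injectivity: each $X_q$ is indecomposable (as observed in Section~\ref{s:indobj}) with $\End(X_q)=\Path(q,q)/J=\mathbb{K}$ by Theorem~\ref{t:RHom}(3), and for $q\neq q'$ every element of $\Hom(X_q,X_{q'})$ and of $\Hom(X_{q'},X_{q})$ is represented by a path of positive length, hence is not an isomorphism; so the classes $[X_q]$ are pairwise distinct. For surjectivity: given an indecomposable $X\in\D$, choose a height function $\hh$ for which $R\rho_{\hh}X$ is concentrated in degree $0$ and projective in $\Rep(\Gamma,\Om_{\hh})$ -- every indecomposable of $\D^b(\Gamma,\Om_{\hh})$ lies on some slice, where it becomes projective (see \cite{happel}). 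On the other hand, for $q=(i,\hh(i))\in\Gamma_{\hh}$ the object $X_q$ restricts on the slice to $v\mapsto\Path^0(q,v)$, which projects bijectively onto the paths $i\to j$ in $(\Gamma,\Om_{\hh})$ and so equals the projective $P_i$. Since $R\rho_{\hh}$ is an equivalence (Theorem~\ref{t:equivalence}), matching projectives forces $X\cong X_q$. This identifies the vertices of the Auslander--Reiten quiver of $\D$ with $\Gammahat$ via $q\mapsto[X_q]$.

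Next I would compute the arrows. By definition the number of arrows $[X_{q'}]\to[X_q]$ equals $\dim\text{Irr}(X_{q'},X_q)$, which is the dimension of $\mathrm{rad}/\mathrm{rad}^2$ inside $\Hom(X_{q'},X_q)$. By Theorem~\ref{t:RHom}(3) this $\Hom$-space is $\Path(q,q')/J$; its radical consists of the classes of positive-length paths, and since $J$ is the mesh ideal, generated in path-length $2$, it does not affect length-one paths. Hence $\mathrm{rad}/\mathrm{rad}^2$ is spanned precisely by the single edges $q\to q'$ of $\Gammahat$. Therefore there is an arrow $[X_{q'}]\to[X_q]$ exactly when there is an edge $q\to q'$ in $\Gammahat$, equivalently an edge $q'\to q$ in $\Gammahat^{op}$, so $q\mapsto[X_q]$ carries $\Gammahat^{op}$ isomorphically onto the Auslander--Reiten quiver of $\D$.

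The step I expect to be the main obstacle is this last orientation computation: one must be certain that the irreducible morphisms are exactly the single edges -- that no path of length $\ge 2$ survives as an irreducible map and that no single edge becomes trivial in $\mathrm{rad}/\mathrm{rad}^2$. I would argue this directly from the path/mesh description: any path of length $\ge 2$ factors through an intermediate vertex and so lies in $\mathrm{rad}^2$, while the length-one paths stay linearly independent modulo $J+\mathrm{rad}^2$ because the mesh relations are homogeneous of path-length $2$. Equivalently, this is the standard identification of $\D$ with the mesh category of $\Gammahat$, which can also be imported through $R\rho_{\hh}$ from the known Auslander--Reiten structure of $\D^b(\Gamma,\Om_{\hh})$; the one genuinely new input is the transpose in Theorem~\ref{t:RHom}(3), which is exactly what converts $\Gammahat$ into $\Gammahat^{op}$.
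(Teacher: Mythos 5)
Your proposal is correct, but it takes a genuinely different route from the paper. The paper's proof is two lines: since $R\rho_{\hh}$ is an equivalence (Theorem~\ref{t:equivalence}), the Auslander--Reiten quiver of $\D$ coincides with that of $\D^{b}(\Gamma,\Om_{\hh})$, which by \cite{happel} is $\Gammahat^{op}$; the identification of the vertices with the classes $[X_q]$ and the reason for the superscript $op$ are deferred to the remark following the corollary and to Theorem~\ref{t:mesh}. You instead build the isomorphism of quivers by hand: surjectivity of $q\mapsto[X_q]$ by moving an arbitrary indecomposable onto a slice via reflection functors (legitimate, since Theorem~\ref{t:reflecfunc} lets you realize any such move as a change of height function, and $X_q$ restricted to $\Gamma_{\hh}$ is indeed $P_i$), injectivity from $\End(X_q)=\mathbb{K}$ and the absence of positive-length loops in $\Gammahat$, and the arrows from $\mathrm{rad}/\mathrm{rad}^2$ of $\Hom(X_{q'},X_q)=\Path(q,q')/J$, where homogeneity of the mesh relations in path-length $2$ guarantees that single edges survive and nothing longer does. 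What your approach buys is that the specific bijection and the orientation reversal — which in the paper are a convention recorded in a remark — are derived rather than declared; the transpose in Theorem~\ref{t:RHom}(3) is correctly identified as the source of the $op$. The cost is reliance on two standard facts you should cite explicitly (every indecomposable of $\D^b(\Gamma,\Om)$ becomes projective after a suitable sequence of derived reflection functors, and irreducible morphisms compute $\mathrm{rad}/\mathrm{rad}^2$ in a Krull--Schmidt category), both of which are in \cite{happel} and \cite{ars}. There is no circularity: Theorems~\ref{t:RHom} and~\ref{t:equivalence} both precede the corollary.
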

\begin{proof}
By Theorem~\ref{t:equivalence} the Auslander-Reiten quiver of $\D$ is isomorphic to that of $\D^{b} (\Gamma ,\Om_{\hh})$. It is well known (see \cite{happel} for example) that the Auslander-Reiten quiver of $\D^{b} (\Gamma ,\Om_{\hh})$ is isomorphic to $\Gammahat^{op}$.
\end{proof}

\begin{remark}
Usually the Auslander-Reiten quiver of $\D^{b} (\Gamma , \Om_{\hh})$ is identified with $\Gammahat$ by identifying the projectives with a slice in $\Gammahat$ that gives the opposite orientation to $\Om_{\hh}$, and proceeding from there. For reasons that will become clear in Section~\ref{s:mesh}, we instead identify it with $\Gammahat^{op}$ by identifying the projectives with the slice $\Gamma_{\hh} \subset \Gammahat^{op}$.
\end{remark}

\begin{cor}
The category $\D$ has Serre Duality:
$$\Hom_{\D} (X,Y) =( \Ext_{\D}^{1} (Y, \tau_{\D} X))^{*}$$
where $^{*}$ denotes the dual space and $\tau_{\D}$ is given by Equation~\ref{e:tauD}.
\end{cor}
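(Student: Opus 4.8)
The plan is to transport the Serre duality property from the bounded derived category $\D^{b}(\Gamma,\Om_{\hh})$, where it is classical, to $\D$ by means of the equivalence $R\rho_{\hh}$ of Theorem~\ref{t:equivalence}. For a Dynkin quiver the category $\D^{b}(\Gamma,\Om_{\hh})$ is the derived category of a finite-dimensional hereditary algebra, and it is standard (see \cite{happel}) that it admits Auslander--Reiten triangles, equivalently a functor $\tau$ satisfying the defining relation already recorded in Section~\ref{s:quivers},
$$\Hom_{\D^{b}(\Gamma,\Om_{\hh})}(A,B)\cong\bigl(\Ext^{1}_{\D^{b}(\Gamma,\Om_{\hh})}(B,\tau A)\bigr)^{*}.$$
So the first step is merely to quote this. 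The second step uses that $R\rho_{\hh}$ is an equivalence of \emph{triangulated} categories: it commutes with the shift and carries distinguished triangles to distinguished triangles, so it induces isomorphisms $\Hom_{\D}(X,Y)\cong\Hom_{\D^{b}}(R\rho_{\hh}X,R\rho_{\hh}Y)$ and, for every $i$, $\Ext^{i}_{\D}(X,Y)\cong\Ext^{i}_{\D^{b}}(R\rho_{\hh}X,R\rho_{\hh}Y)$.

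The crux of the argument, and the step I expect to be the main obstacle, is to identify $\tau_{\D}$ (Equation~\eqref{e:tauD}) with the Auslander--Reiten translate under the equivalence, that is, to produce a functorial isomorphism $R\rho_{\hh}\circ\tau_{\D}\cong\tau\circ R\rho_{\hh}$. By Corollary~\ref{c:ARquiver} the Auslander--Reiten quiver of $\D$ is $\Gammahat^{op}$, and $R\rho_{\hh}$ identifies it with the Auslander--Reiten quiver of $\D^{b}(\Gamma,\Om_{\hh})$, which is again $\Gammahat^{op}$. On this common quiver both $\tau_{\D}$ and $\tau$ act as the twist $(i,n)\mapsto(i,n+2)$ of Equation~\eqref{e:tau}, as recorded at the end of Section~\ref{s:gammahat}. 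Since an exact autoequivalence of a triangulated category of this type is determined up to natural isomorphism by its action on indecomposables together with its compatibility with the shift, the two functors must correspond under $R\rho_{\hh}$. The delicate point here is that the argument must produce a genuine isomorphism of functors compatible with the triangulated structure, and must be seen to be independent of the chosen height function $\hh$, rather than simply a bijection of vertices of the Auslander--Reiten quiver.

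Granting this identification, the result follows by concatenating the isomorphisms:
\begin{align*}
\Hom_{\D}(X,Y)
&\cong\Hom_{\D^{b}(\Gamma,\Om_{\hh})}(R\rho_{\hh}X,R\rho_{\hh}Y)\\
&\cong\bigl(\Ext^{1}_{\D^{b}(\Gamma,\Om_{\hh})}(R\rho_{\hh}Y,\tau R\rho_{\hh}X)\bigr)^{*}\\
&\cong\bigl(\Ext^{1}_{\D^{b}(\Gamma,\Om_{\hh})}(R\rho_{\hh}Y,R\rho_{\hh}\tau_{\D}X)\bigr)^{*}\\
&\cong\bigl(\Ext^{1}_{\D}(Y,\tau_{\D}X)\bigr)^{*},
\end{align*}
where the first and last isomorphisms come from the equivalence $R\rho_{\hh}$, the second is Serre duality on $\D^{b}(\Gamma,\Om_{\hh})$, and the third is the identification $\tau R\rho_{\hh}\cong R\rho_{\hh}\tau_{\D}$ established above. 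This is exactly the asserted Serre duality for $\D$.
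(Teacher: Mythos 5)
Your proposal is correct and follows essentially the same route as the paper: the paper's own proof simply cites Happel (Proposition 4.10) for Serre duality in $\D^{b}(\Gamma,\Om_{\hh})$ and implicitly transports it to $\D$ along the equivalence $R\rho_{\hh}$ of Theorem~\ref{t:equivalence}. You spell out the one step the paper leaves tacit, namely the identification $R\rho_{\hh}\circ\tau_{\D}\cong\tau\circ R\rho_{\hh}$, which the paper only addresses later in Theorem~\ref{t:mesh}; your argument for it via the action on the Auslander--Reiten quiver is sound for Dynkin type, where the derived category is standard.
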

\begin{proof}
It is well known (see \cite{happel} Proposition 4.10 p.42) that in the category $\D^{b} (\Gamma, \Om_{\hh})$ this relation holds. 
\end{proof}

The following theorem shows that the restriction functor is compatible with the reflection functors.

\begin{thm}\label{t:reflecfunc} Let $i$ be a source (or sink) for the orientation $\Om_{\hh}$ and let $S_{i}^{\pm}$ denote the corresponding reflection functor. Then the following diagram is commutative.
$$\xymatrix{
&  \D^{b} (\Gamma ,\Om_{\hh})   \ar[dd]^{RS^{\pm}_{i}} \\
\D \ar[dr]_{R \rho_{s^{\pm}_{i} \hh}} \ar[ur]^{R \rho_{\hh}} & \\
& \D^{b} (\Gamma ,\Om_{s^{\pm}_{i} \hh}) \\
}$$

\end{thm}

\begin{proof}
Let $q\in \Gamma_{\hh}$ be a source. Let $X \in \D$, so that there is a fixed identification $X(\tau q) \simeq Cone(X(q) \to \oplus X(q^{\prime}))$. Restriction along the slice $\Gamma_{s^{+}_{i} \hh}$ gives $X(p)$ if $p\neq q$ and $X(\tau q)$ if $p=q$. The reflection functor is defined as $X(p)$ if $p\neq q$ and $Cone(X(q) \to \oplus X(q^{\prime}))$ if $p=q$, so the diagram commutes.
\end{proof}

\section{The Mesh Category $\B$}\label{s:mesh}
In the remainder of this paper, fix $\Gamma$ to be Dynkin.
In this section the definition of the mesh category of a translation quiver $(Q, \tau)$ is recalled and then related to the category $\D$.

Let $(Q, \tau)$ be a translation quiver (see \cite{ars} Chapter VII for details). Define the set of indecomposable objects of the mesh category $\B (Q)$ to be the vertices of $Q$. Set $\Hom_{\B} (q,q^{\prime}) = \Path (q,q^{\prime}) / J$ where $J$ is the mesh ideal generated by the mesh relations $\sum_{s(e)=i} \epsilon (e) \bar e e$ (see Equation~\ref{e:mesh}).

Consider the mesh category of the translation quiver $(\Gammahat^{op} , \tau_{\Gammahat} )$ where $\tau_{\Gammahat} (i,n) = (i,n+2)$. For simplicity denote this by $\B$ and denote the translation by $\tau_{\B}$.
\begin{remark}
Note that we consider the mesh category of $\Gammahat^{op}$ instead of $\Gammahat$ since the Auslander-Reiten quiver of $\D$ is $\Gammahat^{op}$.
\end{remark}

It is shown in \cite{bbk} (Section 6) that there are the following automorphisms in $\B$:

\begin{enumerate}
\item A Nakayama automorphism $\nu_{\B}$ which commutes with $\tau_{\B}$. (Here $\nu = \tilde{\beta}^{-1}$ in the notation of \cite{bbk}.)
\item An automorphism $\gamma_{\B}$ defined by $\gamma_{\B} := \nu_{\B} \tau_{\B}^{-1}$.
\end{enumerate}

These automorphisms satisfy:
\begin{equation}\label{e:nugamma}
\begin{aligned}
\nu_{\B}^{2} &= \tau_{\B}^{-(h-2)} \\
\gamma_{\B}^{2} &= \tau_{\B}^{-h}
\end{aligned}
\end{equation}

As before, for any $i\in \Gamma$ define $\check{\imath}$ by $-\alpha_{i}^{\Pi} = w_{0}^{\Pi} (\alpha_{\check{\imath}}^{\Pi})$, where $w_{0}^{\Pi} \in W$ is the longest element and $\Pi$ is any set of simple roots.

In terms of $\Gammahat^{op}$ the maps $\nu_{\B}$ and $\gamma_{\B}$ are given by: 
\begin{equation}
\begin{aligned}
\nu_{\B} (i,n) &= (\check{\imath} \ ,n-h+2) \\
\gamma_{\B} (i,n) &= (\check{\imath} \ ,n-h)
\end{aligned}
\end{equation}

\begin{example}
For the graph $\Gamma = A_{4}$, $\check{\imath} \ = 5-i$ and $h=5$ so $\nu_{\B} (i,n) = (5-i , n-3)$ and $\gamma_{\B} (i,n) = (5-i , n-5)$. The maps $\nu_{\B}$ and $\gamma_{\B}$ are shown in Figure~\ref{f:nugammaop}. 

\begin{figure}[ht]
\centering
\begin{overpic}
{nugammaA4op}
\put (47,80){$\Gamma_{h}$}
\put (47,30){$\nu_{\B} \Gamma_{h}$}
\put (47,13){$\gamma_{\B} \Gamma_{h}$}
\end{overpic}

\caption{The maps $\nu_{\B}$ and $\gamma_{\B}$ in the case $\Gamma = A_{4}$. A slice $\Gamma_{h}$ and its images under $\nu_{\B}$ and $\gamma_{\B}$ are shown in bold. Recall that we are considering the mesh category of $\Gammahat^{op}$ as mentioned above.}\label{f:nugammaop}
\end{figure}
\end{example}

Recall that the Auslander-Reiten quiver of $\D$ is $\Gammahat^{op}$. This identification is given by $[X_{q}] \mapsto q$. In terms of arrows, by Theorem~\ref{t:RHom} Part 2, for each arrow $q \to q^{\prime}$ in $\Gammahat$ there is an arrow $q^{\prime} \to q$ in the Auslander-Reiten quiver.

In the category $\D$ define an automorphism $\tau_{\D}$ by 
\begin{equation}\label{e:tauD}
\tau_{\D} (X) (q^{\prime}) = X (\tau^{-1} q^{\prime}).
\end{equation}
Notice that $$X_{\tau q} (q^{\prime}) = \Path_{\Gammahat}^{\bullet} (\tau q , q^{\prime} ) \simeq \Path_{\Gammahat}^{\bullet} (q, \tau^{-1} q^{\prime} )= \tau_{\D} (X_{q}) (q^{\prime})$$ so that $\tau_{\D} X_{q} \simeq X_{\tau q}$ for the indecomposables $X_{q}$. In terms of the Auslander-Reiten quiver of $\D$, this identifies $\tau_{\D}$ with the translation $\tau$ on $\Gammahat^{op}$.

\begin{thm}\label{t:mesh}
Let $\hh$ be a height function.
\begin{enumerate}
\item There are equivalences of additive categories, given by the following commutative diagram:
$$\xymatrix{
\B & \D^{b} (\Gamma ,\Om_{\hh}) \ar[l]_{\Psi_{\hh}} \\
& \D \ar[u]_{R \rho_{\hh}} \ar[ul]^{\psi} \\
}$$
\item Under these equivalences the automorphisms $\nu_{\B}$ gives an Nakayama automorphism $\nu_{\D}$ on $\D$ and is identified with the Nakayama automorphism $\nu$ in $\D^{b} (\Gamma, \Om_{\hh})$
\item The map $\tau_{\B}$ can be identified with the Auslander-Reiten translation in $\D^{b} (\Gamma ,\Om_{\hh})$, and with $\tau_{\D}$ in $\D$.
\item The map $\gamma_{\B}$ can be identified with $T$ in $\D^{b} (\Gamma, \Om_{\hh})$, and with $T_{\D}$ in $\D$. Hence we can impose a triangulated structure on $\B$ making the equivalences in (1) triangulated equivalences.
\item In $\D$ the relation $T^{2} = \tau_{\D}^{-h}$ holds and hence the objects $X_{q}$ satisfy the relation $X_{q}^{\bullet +2} \simeq X_{\tau^{-h} q}^{\bullet}$.
\end{enumerate}
\end{thm}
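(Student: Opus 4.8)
The plan is to construct a single additive equivalence $\psi\colon\D\to\B$ directly from the morphism computations already in hand, and then obtain (2)--(5) by transport of structure along $\psi$ and along the triangulated equivalence $R\rho_{\hh}$ of Theorem~\ref{t:equivalence}. First I would define $\psi$ on objects by $\psi(X_q)=q$; by Corollary~\ref{c:ARquiver} this is a bijection between a complete set of indecomposables of $\D$ and the vertices of $\Gammahat^{op}$, which are by definition the indecomposables of $\B$. On morphisms, Theorem~\ref{t:RHom}(3) gives $\Hom_{\D}(X_q,X_{q'})=\Path_{\Gammahat}(q',q)/J=\Path_{\Gammahat^{op}}(q,q')/J=\Hom_{\B}(q,q')$, so $\psi$ is bijective on Hom-spaces. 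The one genuine point is that $\psi$ is a \emph{functor}, i.e.\ that composition in $\D$ matches concatenation of paths. I would check this using the functorial identification $\RHom(X_q,Y)=Y(q)$ of Theorem~\ref{t:RHom}(1): a morphism $X_q\to X_{q'}$ is an element $\xi\in X_{q'}(q)$, a morphism $X_{q'}\to X_{q''}$ given by a path $\eta$ acts on $X_{q'}$ by path precomposition, so the induced map on $\RHom(X_q,-)$ sends $\xi$ to $\xi\eta$, which is exactly concatenation (and matches composition in $\B$). Extending additively makes $\psi$ an equivalence of additive categories, and I then set $\Psi_{\hh}:=\psi\circ(R\rho_{\hh})^{-1}$, which makes the triangle in (1) commute by construction.

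Parts (2)--(4) follow by observing that $\tau_{\D}$, the Auslander--Reiten translation, and the Nakayama/Serre functor are all intrinsic to an additive category equipped with its AR structure, hence are preserved by the equivalences of (1). For (3), the AR translation of $\B$ is the translation $\tau_{\Gammahat}$ of $(\Gammahat^{op},\tau_{\Gammahat})$; since we already know $\tau_{\D}X_q\simeq X_{\tau q}$ and the AR quiver of $\D$ is identified with $\Gammahat^{op}$ via $[X_q]\mapsto q$, the three translations agree vertex by vertex. For (2), I would compare the combinatorial automorphism $\nu_{\B}(i,n)=(\check\imath,n-h+2)$ with the Nakayama functor $\nu$ on $\D^{b}(\Gamma,\Om_{\hh})$; both are automorphisms determined up to isomorphism by their action on the AR quiver, and matching their vertex actions amounts to identifying $\check\imath$ (defined via $w_0^{\Pi}$ in Equation~\ref{e:nakayama}) with the permutation induced by the Serre functor on $\Gammahat^{op}$, which is standard (see \cite{happel}, \cite{bbk}); one then defines $\nu_{\D}$ as the transport of $\nu_{\B}$. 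For (4), since the Serre functor factors as $\nu=\tau\circ T$ on $\D^{b}(\Gamma,\Om_{\hh})$, we have $T=\nu\circ\tau^{-1}$, so $\gamma_{\B}=\nu_{\B}\tau_{\B}^{-1}$ is identified with the shift $T$; as $\gamma_{\B}$ now corresponds to the shift of the triangulated category $\D$ (Theorem~\ref{t:triangulated}), transporting the distinguished triangles of $\D$ along $\psi$ equips $\B$ with a triangulated structure for which all equivalences in (1) are exact.

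Finally, (5) is immediate from the relation $\gamma_{\B}^{2}=\tau_{\B}^{-h}$ recorded in Equation~\ref{e:nugamma}: applying the identifications of (3) and (4) gives $T_{\D}^{2}=\gamma_{\B}^{2}=\tau_{\B}^{-h}=\tau_{\D}^{-h}$ in $\D$, and evaluating on the indecomposable $X_q$ together with $\tau_{\D}^{-h}X_q\simeq X_{\tau^{-h}q}$ yields $X_q^{\bullet+2}\simeq X_{\tau^{-h}q}^{\bullet}$.

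I expect the main obstacle to be part (2). Although the bijection on objects and on Hom-spaces is already available, showing that the \emph{combinatorially} defined $\nu_{\B}$ coincides with the \emph{intrinsic} Nakayama/Serre functor requires two nontrivial ingredients: first, the fact that an automorphism of the mesh category is pinned down up to natural isomorphism by its effect on vertices; and second, a careful bookkeeping matching the $w_0^{\Pi}$-description of $\check\imath$ with the action of the Serre functor on the AR quiver of $\D^{b}(\Gamma,\Om_{\hh})$, including the $n\mapsto -n$ reversal coming from working with $\Gammahat^{op}$ rather than $\Gammahat$.
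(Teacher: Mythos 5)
Your proposal is correct and follows essentially the same route as the paper: the equivalence $\psi\colon X_q\mapsto q$ justified by Corollary~\ref{c:ARquiver} and the Hom computation of Theorem~\ref{t:RHom}, transport of $\nu$, $\tau$, $T$ along these equivalences using $T=\nu\tau^{-1}$, and part (5) from $\gamma_{\B}^{2}=\tau_{\B}^{-h}$ in Equation~\ref{e:nugamma}. The only (harmless) deviations are that you define $\Psi_{\hh}$ as $\psi\circ(R\rho_{\hh})^{-1}$ rather than directly on projectives via $P_i\mapsto(i,\hh(i))$, and that you spell out the compatibility of composition with path concatenation, which the paper leaves implicit.
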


\begin{proof}
\par\indent
\begin{enumerate}
\item The equivalence $\rho_{\hh}$ is from Theorem~\ref{t:equivalence}. \\
The equivalence $\Psi_{\hh}$ is  the map which is given $P_{i} \mapsto (i, \hh (i))$ on projectives, so that the projectives in $\Rep (\Gamma , \Om_{\hh})$ map to the slice $\Gamma_{\hh} \subset \Gammahat^{op}$. (Note that in $\Gammahat^{op}$ the arrows are reversed, so this agrees with the usual identification of projectives with a slice giving the orientation opposite to $\Om_{\hh}$.) This is just the identification of $\Ind (\D^{b}(\Gamma, \Om_{\hh}))$ with its Auslander-Reiten quiver. That this is an equivalence is well-known, see \cite{happel} for example.\\
The equivalence $\psi$ is given by $X_{q} \mapsto q$. By Corollary~\ref{c:ARquiver} the objects $X_{q}$ form a complete list of indecomposables, and since 
\begin{align*}
\Hom_{\D} (X_{q}, X_{q^{\prime}}) &= H^{0} (X_{q^{\prime}} (q)) = \Path_{\Gammahat} (q^{\prime}, q)/J = \Path_{\Gammahat^{op}} (q, q^{\prime})/J \\
&= \Hom_{\B} (q, q^{\prime})
\end{align*}
it follows that this is an equivalence.
\item Follows from (1), since $\Psi_{\hh}$ is the identification of $\Ind (\D^{b} (\Gamma ,\Om_{\hh}))$ with its Auslander-Reiten quiver.
\item Again follows from (1).
\item In $\D^{b} (\Gamma ,\Om_{\hh})$ the Auslander-Reiten translation is defined by $\tau_{\D^{b}} := T^{-1} \nu$, or equivalently $T = \nu \tau^{-1}$
\item By (4) $\gamma_{\B}$ is identified with $T_{\D}$, by (2) and (3) so there is an identification of Nakayama automorphisms and translations in $\B$ and $\D$. Then using Equation~\ref{e:nugamma} gives the result.
\end{enumerate}
\end{proof}

\section{Periodicity in the Dynkin Case}\label{s:period}

This section discusses periodicity of the ``dg-preprojective algebra" $A$ in the case where $\Gamma$ is Dynkin.
Recall the decomposition given in Section 2:
\begin{equation}
A=\bigoplus_{n\le 0}A^n=\bigoplus_{i,j\in \Gamma , \ l\in \Z_+, n\le 0}
A^n_{i,j;l}
\end{equation}
Note that the differential in $A$ preserves the grading by path length, so this decomposition passes to homology:
$$H^{n} (A) = \bigoplus_{i,j;l} H^{n}(A_{i,j;l})$$

Now fix $q=(j,n+l)$ and $q^{\prime} = (i,n)$. Then by definition of $X_{q^{\prime}}$, and by Part 2 of Theorem~\ref{t:RHom}, $\RHom (X_{q} , X_{q^{\prime}}) = X_{q^{\prime}}(q) = A_{i,j;l}$ so the component $A_{i,j;l}$ can be interpreted as the $\RHom$ complex of the corresponding indecomposables.

Recall that in the case where $\Gamma$ was not Dynkin the complex $A$ was a dg-resolution of the preprojective algebra $\Pi$. In particular, all homology was in degree 0. The decomposition of homology above and the identification $A_{i,j;l} \simeq \RHom_{\D} (X_{q}, X_{q^{\prime}})$ makes it clear that this is not the case when $\Gamma$ is Dynkin. 

In the case where $\Gamma$ is Dynkin there is the following periodicity result for the Koszul complex of the preprojective algebra and its homology. This is likely known to experts, but does not seem to be easily available in the literature.

\begin{thm}\label{t:periodic}
There is a quasi-isomorphism of complexes
\begin{equation}\label{e:periodicity}
A_{i,j;l}^{\bullet +2} \simeq A_{i,j;l+2h}^{\bullet}.
\end{equation}
In terms of the homology of the complex $A$ this gives:
$$H^{k+2}(A_{i,j;l}) = H^{k}(A_{i,j;l+2h})$$
\end{thm}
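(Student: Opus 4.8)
The plan is to deduce the periodicity formally from two facts already in hand, avoiding any fresh computation with the differential $d$ on $A$: the identification of the components $A_{i,j;l}$ with $\RHom$-complexes in $\D$ (Theorem~\ref{t:RHom} Part 2), and the $2$-periodicity of the indecomposables up to the twist $\tau^{-h}$ (Theorem~\ref{t:mesh} Part 5). The shift $\bullet+2$ on $A$ should be matched, through these identifications, with the functor $T^2 \simeq \tau_{\D}^{-h}$ acting on the relevant object.

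First I would fix the dictionary. Choosing $q=(j,n+l)$ and $q'=(i,n)$, Theorem~\ref{t:RHom} Part 2 gives an identification of complexes $A_{i,j;l}^{\bullet} \simeq \RHom(X_q, X_{q'})$. The cohomological shift $A_{i,j;l}^{\bullet+2}$ then corresponds to $\RHom(X_q, X_{q'})^{\bullet+2}$, and since the shift functor is compatible with $\RHom$ in the second variable, $\RHom(X_q, X_{q'})^{\bullet+2} \simeq \RHom(X_q, X_{q'}^{\bullet+2})$.

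Next I would apply Theorem~\ref{t:mesh} Part 5, which yields a quasi-isomorphism $X_{q'}^{\bullet+2} \simeq X_{\tau^{-h}q'}$. Since $\tau(i,n)=(i,n+2)$ we have $\tau^{-h}q' = (i,n-2h)$, so $\RHom(X_q, X_{q'}^{\bullet+2}) \simeq \RHom(X_q, X_{(i,n-2h)})$. Applying Theorem~\ref{t:RHom} Part 2 once more, now to the pair $q=(j,n+l)$ and $(i,n-2h)$ whose second coordinates differ by $(n+l)-(n-2h)=l+2h$, identifies this last complex with $A_{i,j;l+2h}^{\bullet}$. Composing the chain of identifications gives the quasi-isomorphism $A_{i,j;l}^{\bullet+2}\simeq A_{i,j;l+2h}^{\bullet}$, and reading off cohomology in degree $k$ on both sides produces $H^{k+2}(A_{i,j;l})=H^{k}(A_{i,j;l+2h})$.

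The only real care needed is bookkeeping with the two gradings on $A$: the cohomological (number of jumps) grading, which is what $\bullet+2$ shifts, and the path-length grading $l$, which is what the twist $\tau^{-h}$ moves by $2h$. The step I would verify most honestly is that the isomorphism $X_{q'}^{\bullet+2}\simeq X_{\tau^{-h}q'}$ of Theorem~\ref{t:mesh} Part 5 is an isomorphism of objects of $\D$ compatible with these gradings, so that pushing it through the functor $\RHom(X_q,-)$ correctly matches the cohomological shift on the left with exactly the index change $l\mapsto l+2h$ on the right; once the shift conventions on the $\RHom$-complex are pinned down, this is immediate and the whole argument is essentially a diagram of canonical identifications.
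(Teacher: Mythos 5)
Your proposal is correct and follows essentially the same route as the paper: both identify $A_{i,j;l}$ with $\RHom(X_q,X_{q'})=X_{q'}(q)$ via Theorem~\ref{t:RHom} Part 2, invoke the periodicity $X_{q'}^{\bullet+2}\simeq X_{\tau^{-h}q'}$ from Theorem~\ref{t:mesh} Part 5, and compose the identifications. Your bookkeeping of which coordinate plays the role of source versus target in $\RHom$ is in fact slightly more careful than the paper's own.
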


\begin{proof}
By Theorem~\ref{t:mesh} Part 5 there is an identification $X_{q^{\prime}}^{\bullet +2} \simeq X_{\tau^{-h} q^{\prime}}^{\bullet}$ in $\D$, so in particular  $X_{q^{\prime}}^{\bullet +2} (q) \simeq X_{\tau^{-h} q^{\prime}}^{\bullet}(q)$. For $q=(i,n)$ and $q^{\prime} =(j,n+l)$  there are identifications 
\begin{equation}\label{e:periodic}
\begin{aligned}
& \RHom(X_{q} , X_{q^{\prime}}) = X_{q^{\prime}} (q) = A_{i,j;l} \\ 
& \RHom (X_{q} , X_{\tau^{-h} q^{\prime}}) = X_{\tau^{-h} q^{\prime}} (q) = A_{i,j;l+2h}. 
\end{aligned}
\end{equation}

Combining these gives $A_{i,j;l}^{\bullet +2} \simeq A_{i,j;l+2h}$.
\end{proof}

In terms of the category $\D$ this result can be interpreted as follows.

\begin{cor} Let $X,Y \in \D$.
\begin{enumerate}
\item $\RHom^{\bullet} (X, \tau^{-h} Y) \simeq \RHom^{\bullet +2} (X , Y)$
\item $\Ext^{i} (X, \tau^{-h} Y) \simeq \Ext^{i+2} (X, Y)$
\end{enumerate}
\end{cor}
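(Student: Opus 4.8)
The plan is to deduce both statements directly from the functorial identity $T^{2}=\tau_{\D}^{-h}$ established in Theorem~\ref{t:mesh}, Part 5. The essential observation is that this identity is an isomorphism of autofunctors of $\D$, so it applies to arbitrary objects $X,Y\in\D$ and not merely to the indecomposables $X_{q}$. Once this is granted, the corollary is a formal consequence requiring no further input from the combinatorial structure of $\Gammahat$.

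First I would recall that, since $\D$ is a triangulated category (Theorem~\ref{t:triangulated}) realized inside the derived category $\D(\Gammahat)$, the object $\RHom(X,Y)$ is a genuine complex whose $i$-th cohomology is $\Ext^{i}(X,Y)=\Hom_{\D}(X,T^{i}Y)$, and that shifting the second argument by $T^{2}=[2]$ shifts the whole $\RHom$ complex, giving the standard compatibility $\RHom^{\bullet}(X,T^{2}Y)\simeq\RHom^{\bullet+2}(X,Y)$. This is a general property of $\RHom$ and the translation functor and uses no special feature of $\D$. For Part 1 I would then substitute $T^{2}Y\simeq\tau_{\D}^{-h}Y$ using Theorem~\ref{t:mesh}, Part 5, obtaining
\[
\RHom^{\bullet+2}(X,Y)\simeq\RHom^{\bullet}(X,T^{2}Y)\simeq\RHom^{\bullet}(X,\tau^{-h}Y),
\]
which is exactly the claimed quasi-isomorphism. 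For Part 2 I would take $i$-th cohomology of both sides, or equivalently run the same computation at the level of $\Hom$ groups:
\[
\Ext^{i+2}(X,Y)=\Hom(X,T^{i+2}Y)=\Hom(X,T^{i}(T^{2}Y))=\Hom(X,T^{i}(\tau^{-h}Y))=\Ext^{i}(X,\tau^{-h}Y).
\]

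There is essentially no obstacle here beyond bookkeeping, and the proof is short. The only points that merit care are: that the identity of Theorem~\ref{t:mesh}, Part 5 is invoked as a \emph{natural} isomorphism of functors, so that $\tau_{\D}^{-h}$ may legitimately be commuted past $T^{i}$ and the statement extends from the indecomposables $X_{q}$ to all $X,Y\in\D$; and that the grading convention $\RHom^{\bullet+2}(X,Y)\simeq\RHom^{\bullet}(X,Y[2])$ is the one used throughout the paper, so that the shift appearing in the periodicity statement matches the one in Theorem~\ref{t:periodic}. With these conventions fixed, both assertions follow immediately from $T^{2}=\tau_{\D}^{-h}$.
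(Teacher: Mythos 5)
Your proof is correct, but it takes a more formal route than the paper does. The paper's own argument first reduces to the indecomposables $X_{q}$ (using Corollary~\ref{c:ARquiver}, that these exhaust $\Ind(\D)$ when $\Gamma$ is Dynkin), and then reads off the claim from the explicit identifications $\RHom(X_{q},X_{q'})=A_{i,j;l}$ and $\RHom(X_{q},X_{\tau^{-h}q'})=A_{i,j;l+2h}$ of Equation~\ref{e:periodic}, i.e.\ from the periodicity of the components of the dg-preprojective algebra in Theorem~\ref{t:periodic}. You instead apply the relation $T^{2}=\tau_{\D}^{-h}$ of Theorem~\ref{t:mesh}(5) directly to arbitrary $X,Y$ and use the standard compatibility of $\RHom$ with the shift; since Theorem~\ref{t:periodic} is itself deduced from Theorem~\ref{t:mesh}(5), both arguments rest on the same underlying fact, and your version bypasses the combinatorial bookkeeping entirely. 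The one point you rightly flag deserves emphasis: Theorem~\ref{t:mesh}(5) is proved via the mesh category $\B$, whose objects are precisely the vertices of $\Gammahat^{op}$, so the identity $T^{2}=\tau_{\D}^{-h}$ is most directly available on indecomposables; the paper's reduction to the $X_{q}$ is exactly what sidesteps the need to know it as a natural isomorphism of autofunctors on all of $\D$. Since $\Gamma$ is Dynkin and every object of $\D$ is a finite direct sum of shifts of the $X_{q}$, extending the isomorphism to general $X,Y$ is routine, but if you want your argument to stand on its own you should either say this explicitly or fold in the same one-line reduction to indecomposables that the paper uses.
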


\begin{proof} First note that the objects $X_{q}$ form a complete list of indecomposables in $\D$ (in this section $\Gamma$ is Dynkin). So it's enough to prove this result for these objects. For these objects, recalling that $\tau X_{q} \simeq X_{\tau q}$ shows that the first statement follows from Equation~\ref{e:periodic} in the proof of Theorem~\ref{t:periodic}. The second statement follows by taking homology.
\end{proof}

\section{The quotient category $\D /T^{2}$}\label{s:quotcat}
It was shown in \cite{px} that the category $\D^{b} (\Gamma , \Om)/ T^{2}$ is a triangulated category, and that the set $\Ind(\Gamma , \Om)$, of classes of indecomposables gives the corresponding root system. More general quotient categories were studied in \cite{keller}, where conditions for a quotient category to inherit a triangulated structure are given.

In this section we consider the quotient category $\C = \D / T^{2}$ and relate it to Theorem~\ref{t:main1}.

\begin{prop}\label{p:quotient}
The quotient category $\D /T^{2}$ has the following properties:
\begin{enumerate}
\item It is triangulated.
\item $\tau^{-h} = Id = \tau^{h}$
\item It has Auslander-Reiten quiver $\Gammahat / \tau^{h} = \Gammahat_{cyc}$.
\end{enumerate}
\end{prop}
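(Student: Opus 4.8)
The plan is to establish the three claims in order, using the machinery already developed for $\D$ and the identification of $\D$ with the mesh category $\B$ from Theorem~\ref{t:mesh}, together with the key relation $T^{2}=\tau_{\D}^{-h}$ established in Theorem~\ref{t:mesh} Part 5.

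For Part 1, the triangulated structure on the quotient $\C=\D/T^{2}$ does not need to be built from scratch. Since $T=T_{\D}$ is the translation functor of the triangulated category $\D$, the quotient by its square $T^{2}$ is exactly the type of ``$n$-periodic'' quotient studied in \cite{px} and in the more general framework of \cite{keller}. First I would cite \cite{px}, which proves precisely that $\D^{b}(\Gamma,\Om)/T^{2}$ is triangulated; by Theorem~\ref{t:equivalence} the categories $\D$ and $\D^{b}(\Gamma,\Om_{\hh})$ are equivalent as triangulated categories, so this result transfers immediately to give that $\C=\D/T^{2}$ is triangulated. Alternatively, one verifies that $T^{2}$ satisfies the hypotheses of the orbit-category construction in \cite{keller} (the functor $T^{2}$ is an autoequivalence commuting with $T$), so that the quotient inherits a triangulated structure with the induced translation and with distinguished triangles the images of those in $\D$. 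This step is essentially a citation and is not the main difficulty.

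For Part 2, the point is purely formal once Theorem~\ref{t:mesh} Part 5 is in hand. In $\D$ we have $T^{2}=\tau_{\D}^{-h}$; therefore in the quotient $\C=\D/T^{2}$, where $T^{2}$ becomes the identity functor by construction, the image of $\tau_{\D}^{-h}$ is also the identity. Hence $\tau^{-h}=\id$ in $\C$, and inverting gives $\tau^{h}=\id$ as well, which is exactly the stated relation $\tau^{-h}=\id=\tau^{h}$. I would spell out that the functorial isomorphism $T^{2}\simeq\tau_{\D}^{-h}$ of Theorem~\ref{t:mesh} descends to the quotient.

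For Part 3, I would use Corollary~\ref{c:ARquiver}, which identifies the Auslander-Reiten quiver of $\D$ with $\Gammahat^{op}$ via $[X_{q}]\mapsto q$, together with the fact that $\tau_{\D}$ is identified with the translation $\tau$ on $\Gammahat^{op}$ (as noted just before Theorem~\ref{t:mesh}). Passing to the quotient $\C=\D/T^{2}$ identifies objects that differ by $T^{2}\simeq\tau_{\D}^{-h}$, so on the level of the Auslander-Reiten quiver one quotients the vertex set $\Gammahat$ by the action of $\tau^{h}$, which sends $(i,n)\mapsto(i,n+2h)$. This is precisely the identification collapsing $\Z$ to $\Z_{2h}$ in the second coordinate, yielding $\Gammahat/\tau^{h}=\Gammahat_{cyc}$ as defined in Equation~\ref{e:Ihat}. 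The main subtlety to check here is that passing to the quotient category does not create new indecomposables or unexpected identifications of morphism spaces: the indecomposables of $\C$ are exactly the images of the $X_{q}$, and two such become isomorphic in $\C$ if and only if they differ by a power of $T^{2}$, i.e.\ by a power of $\tau^{h}$. The hardest part of the whole proposition is verifying this last point carefully—that the orbit quiver really is the Auslander-Reiten quiver of $\C$ and that the periodicity relation from Theorem~\ref{t:periodic} ensures the Hom-spaces in $\C$ are computed by paths in the finite quiver $\Gammahat_{cyc}$ modulo the mesh ideal, with no collapse beyond the expected $2h$-periodicity.
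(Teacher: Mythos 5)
Your proposal is correct and follows essentially the same route as the paper, which simply cites \cite{keller} for the triangulated structure and derives Parts 2 and 3 from Theorem~\ref{t:mesh} (in particular the relation $T^{2}=\tau_{\D}^{-h}$ and the identification of the Auslander--Reiten quiver of $\D$ with $\Gammahat^{op}$). Your write-up is considerably more detailed than the paper's two-sentence proof, but the underlying argument is the same.
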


\begin{proof}
Part 1 follows from the main result of \cite{keller}. The other parts follow from Theorem~\ref{t:mesh}.
\end{proof}

Let $R$ be the root system corresponding to $\Gamma$. Proposition~\ref{p:quotient}, the equivalences in Section~\ref{s:equiv} and the results of \cite{kt} show that there is a bijection between $R$ and the Auslander-Reiten quiver of the category $\C = \D / T^{2}$. The following Theorem summarizes this bijection and completes the proof of Theorem~\ref{t:main1}. 

\begin{thm}
Let $\Gamma$ be Dynkin with root system $R$ and let $\K$ be the Grothendieck group of $\C$. Set $\< X,Y \> = \dim \RHom (X,Y) = \dim \Hom (X,Y) - \dim \Ext^{1} (X,Y)$. The set $\Ind \subset \K$ of isomorphism classes of indecomposable objects in $\C$, with bilinear form given by $(X,Y) = \< X,Y \> + \< Y, X \>$, is isomorphic to $R$, and $\K$ is isomorphic to the root lattice. Moreover, the translation $\tau$ gives a Coxeter element for this root system, and $T_{\C}$ gives the longest element.
\end{thm}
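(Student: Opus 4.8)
The plan is to transport every assertion along the equivalence of Section~\ref{s:equiv} and then appeal to the well-understood picture for the derived category of a hereditary Dynkin algebra. First I would note that the equivalence $R\rho_{\hh}\colon\D\to\D^{b}(\Gamma,\Om_{\hh})$ of Theorem~\ref{t:equivalence} is triangulated and hence commutes with $T^{2}$, so it descends to a triangulated equivalence $\C=\D/T^{2}\to\D^{b}(\Gamma,\Om_{\hh})/T^{2}$. Because any triangulated equivalence induces an isomorphism of Grothendieck groups preserving the Euler form and intertwining the shift, it suffices to verify every statement in $\D^{b}(\Gamma,\Om_{\hh})/T^{2}$, which is precisely the category studied in \cite{px}.

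Next I would identify $\K$ with the root lattice and check the form. Since the shift acts as $-1$ on $K_{0}$, one has $\K=K_{0}(\C)=K_{0}(\Rep(\Gamma,\Om_{\hh}))$, with basis the classes $[S_{i}]$ of the simple modules; in this basis the Euler form is $\<S_{i},S_{j}\>=\delta_{ij}-a_{ij}$, where $a_{ij}$ is the number of arrows $i\to j$, so the symmetrization $(X,Y)=\<X,Y\>+\<Y,X\>$ is the Cartan matrix of $R$. This realizes $(\K,(\cdot,\cdot))$ as the root lattice with its canonical inner product. For $\Ind=R$ I would combine Gabriel's theorem \cite{gabriel} with the $2$-periodic structure: by Corollary~\ref{c:ARquiver} the objects $X_{q}$ exhaust the indecomposables of $\D$, by Proposition~\ref{p:quotient} those of $\C$ are indexed by $\Gammahat_{cyc}$, and counting vertices gives $\rk(R)\cdot h=|R|$. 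Gabriel's theorem identifies the dimension vectors of the indecomposable representations with the positive roots, and since $[X[1]]=-[X]$ the shifted objects supply the negative roots; thus $\Ind$ maps bijectively onto $R=R_{+}\sqcup R_{-}$, the pair $X_{q},X_{\gamma q}$ matching a root with its negative.

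For the two distinguished Weyl group elements I would argue as follows. The translation $\tau$ is the Auslander--Reiten translation (Theorem~\ref{t:mesh}(3)); by the compatibility with reflection functors (Theorem~\ref{t:reflecfunc}) the composite of the functors $RS_{i}^{+}$ taken over all vertices in an admissible order realizes $\tau$, and on $K_{0}$ each $RS_{i}^{+}$ acts as the simple reflection $s_{i}$, so $\tau$ acts as a product $s_{i_{1}}\cdots s_{i_{r}}$, i.e.\ a Coxeter element; the identity $\tau^{h}=\id$ in $\C$ (Proposition~\ref{p:quotient}) confirms its order $h$. For the longest element I would use Theorem~\ref{t:mesh}(4) to identify $T_{\C}$ with $\gamma_{\B}\colon(i,n)\mapsto(\check{\imath},n-h)$; the relation $\gamma_{\B}^{2}=\tau_{\B}^{-h}=\id$ in $\C$ shows it is an involution, as is $w_{0}$, while the index change $i\mapsto\check{\imath}$ together with the exchange $R_{+}\leftrightarrow R_{-}$ reproduces the defining property $w_{0}(\alpha_{i})=-\alpha_{\check{\imath}}$.

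The hard part will be this last identification. Since the shift acts as $-\id$ on $K_{0}$, the assertion that $T_{\C}$ is genuinely $w_{0}$, and not merely $-\id$, rests on showing that the diagram-automorphism factor $i\mapsto\check{\imath}$ carried by $\gamma_{\B}$ is exactly the correction turning $-\id$ into $w_{0}=-\sigma$, where $\sigma$ is the diagram automorphism; these coincide only when $\sigma=\id$, so for types $A_{n}$, $D_{2n+1}$, $E_{6}$ the bookkeeping is essential. Tracking the direction of the Auslander--Reiten translation (versus the Coxeter element) and the passage to the opposite quiver $\Gammahat^{op}$ used in Section~\ref{s:mesh} is precisely where this care must be exercised.
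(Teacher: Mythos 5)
Your overall strategy coincides with the paper's: the paper offers no argument for this theorem beyond the sentence preceding it, which points to Proposition~\ref{p:quotient}, the equivalences of Section~\ref{s:equiv}, and the results of \cite{kt}, so transporting everything along $R\rho_{\hh}$ to $\D^{b}(\Gamma,\Om_{\hh})/T^{2}$ and invoking \cite{px}, Gabriel's theorem \cite{gabriel}, and the standard Euler-form computation is exactly the intended route. Your treatment of the root lattice, of $\Ind=R_{+}\sqcup R_{-}$ via $[X[1]]=-[X]$, and of $\tau$ as a Coxeter element through the reflection-functor compatibility of Theorem~\ref{t:reflecfunc} is correct, and in fact more detailed than anything the paper records.

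The gap is in the last step, and your own closing paragraph shows why the fix you propose cannot work. Since $\Ind$ is by definition a subset of $\K$ and $T_{\C}$ sends $[X]$ to $[X[1]]=-[X]$, the permutation of $R\subset\K$ induced by $T_{\C}$ is forced to be $\alpha\mapsto-\alpha$; the factor $i\mapsto\check{\imath}$ in $\gamma_{\B}$ is not an additional correction available to you --- it is precisely the combinatorial bookkeeping that makes the position of $X_{q}[1]$ in $\Gammahat_{cyc}$ consistent with the sign flip in $\K$. Hence there is no way to upgrade $-\id$ to $w_{0}=-\sigma$ by this route, and the two differ exactly in the types $A_{n\ge 2}$, $D_{2n+1}$, $E_{6}$ that you single out. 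To justify the assertion one must either interpret ``gives the longest element'' through the explicit bijection $R\to\Gammahat_{cyc}$ of \cite{kt} (where, for the bipartite Coxeter element, $w_{0}$ is realized by a half-period of $\tau$, e.g. $C^{h/2}=w_{0}$ when $h$ is even, while $\gamma$ realizes $\alpha\mapsto-\alpha$), or record that $T_{\C}$ realizes $-\id=\sigma\circ w_{0}$. The paper delegates this point entirely to \cite{kt}; your proof as written does not close it, and you should not expect the diagram-automorphism factor to do the work you assign to it.
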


\bibliographystyle{amsalpha}

\begin{thebibliography}{EFK}
\addcontentsline{toc}{chapter}{Bibliography}

\bibitem[ARS]{ars} M. Auslander, I. Reiten, S.O. Smalo, {\em Representation Theory of Artin Algebras}, Cambridge Studies in Advanced Mathematics, {\bf 36}, Cambridge University Press, Cambridge, 1995.

\bibitem[B\'ed]{bedard} R.~B\'edard, 
    {\em On commutation classes of reduced
        words in Weyl groups}, European J. Combin. {\bf 20} (1999),
    483--505.

\bibitem[B]{beilinson}
A.A. Beilinson, {\em Coherent Sheaves on $\mathbb{P}^{n}$ and Problems of Linear Algebra}, Funct. Anal. Appl., {\bf 12} (1979), 214-216.

\bibitem[BBD]{bbd}
A.A. Beilinson, J. Bernstein, P. Deligne {\em Faisceaux Pervers} in {\em Analysis and Topology on Singular Spaces, I (Luminy, 1981)}, Ast\'erisque, {\bf 100} (1982), pp. 5-171.

\bibitem[BGP]{bgp}
J.N. Bernstein, I.M. Gel'fand, V.A. Ponomarev {\em Coxeter Functors and Gabriel's Theorem}, (English Translation) Russian Math. Surveys, {\bf 28} (1973), no. 2, 17--32.

\bibitem[BBK]{bbk}
S.Brenner, M.C.R. Butler, A.D. King {\em Periodic Algebras which are Almost Koszul}, Algebras and Representation Theory, {\bf 5}, (2002), 331--367.


\bibitem[Bour]{bourbaki}
N. Bourbaki, {\em Lie Groups and Lie Algebras, Chapters 4-6},
Elements of Mathematics, Springer-Verlag, Berlin-Heidelberg-New
York, 2002.

\bibitem[C-B]{crawley-boevey} 
W. Crawley-Boevey {\em Lectures on Representations of Quivers}, available at http://www.amsta.leeds.ac.uk/$\sim$pmtwc/quivlecs.pdf.

\bibitem[DX]{dengxiao}
B. Deng, J. Xiao, {\em On Ringel-Hall Algebras}, Fields Institute Communications, {\bf 40} (2004), 319--347.


\bibitem[EG]{etginz}
P. Etingof, V. Ginzburg {\em Noncommutative Complete Intersections and Matrix Integrals}, Pure and Applied Mathematics Quarterly, {\bf 3} (2007) no.1, pp. 107--151. 

\bibitem[E-EU]{eteu}
P. Etingof, C.-H. Eu, {\em Koszulity and the Hilbert Series of Preprojectve Algebras}, Mathematical Research Letters, {\bf 14} (2007) no.4, pp.589--596.

\bibitem[FLM]{flm}
I. Frenkel, J. Lepowsky, A. Meurman, {\em Vertex Operator Algebras and The Monster}, Academic Press, Boston, 1988.

\bibitem[G1]{gabriel}
P. Gabriel, {\em Unzerlegbare Darstellungen I}, Manuscripta Math., {\bf 6} (1972), 71--103.

\bibitem[G2]{gabriel2}
P. Gabriel, {\em Auslander-Reiten sequences and representation-finite algebras}, in {\em Representation Theory I (Ottawa 1979)}, Lecture Notes in Math. 831, Springer, Berlin, 1980, 1--71.

\bibitem[GM]{gelfman}
S.I. Gelfand, Yu. I. Manin, {\em Methods of Homological Algebra}, Springer-Verlag, Berlin-Heidelbeg-New York, 1996.

\bibitem[Hap]{happel}
D. Happel, {\em Triangulated Categories in the Representation Theory of Finite Dimensional Algebras}, London Math. Soc. Lecture Note Ser., 119, Cambridge University Press, Cambridge, 1988.

\bibitem[J]{jantzen}
J.C. Jantzen, {\em Letures on Quantum Groups}, Amererican Mathematical Society, Graduate Studies in Mathematics, Vol. 6, Providence, 1996.

\bibitem[KST]{kst}
H. Kajiura, K. Saito, A. Takahashi, {\em Matrix factorization and Representations of Quivers II. Type ADE case}, Advances in Mathematics, {\bf 211} (2007), no. 1, 327Ð362.

\bibitem[Kel]{keller}
B. Keller, {\em On Triangulated Orbit Categories},  Doc. Math.  {\bf 10}  (2005), 551--581.

\bibitem[K]{kirillov}
  A. Kirillov Jr, {\em McKay correspondence and equivariant sheaves on
$\mathbf{P}^1$}, Moscow Math. J. {\bf 6} (2006), pp. 505--529.


\bibitem[KO]{kirillov-ostrik}
    A.Kirillov Jr., V. Ostrik,
    {\em On a $q$-analogue of the McKay correspondence and the ADE
    classification of $\widehat{\mathfrak{sl}}_2$ conformal field
    theories}, Adv. in Math. {\bf 171} (2002), 183--227.
    
\bibitem[KT]{kt}
A. Kirillov Jr., J. Thind, {\em Coxeter Elements and Periodic Auslander-Reiten Quiver}, Journal of Algebra {\bf 323} (2010), 1241-1265.

\bibitem[Kl]{kleiner}
M. Kleiner, {\em The graded preprojective algebra of a quiver}, Bull. London Math. Soc. {\bf 36} (2004), no.1, 13--22.

\bibitem[Kos]{kostant} B.~Kostant,
  {\em The Coxeter element and the branching law for the finite
  subgroups  of $SU(2)$},  The Coxeter legacy,  63--70, Amer. Math. Soc., Providence, RI, 2006.

\bibitem[Kos2]{kostant1} B. ~Kostant,
{\em The principal three-dimensional subgroups and the Betti numbers of a
complex simple Lie group}, Amer. J. Math {\bf 81} (1959), pp. 973--1032

\bibitem[L]{lusztig}
G. Lusztig, {\em Canonical Bases Arising from Quantized Enveloping Algebras}, J. Amer. Math. Soc., {\bf 3} (1990), no. 2, 447--498.

\bibitem[L2]{lusztig2}
G. Lusztig, {\em On Quiver Varities}, Advances in Mathematics, {\bf 136} (1998), no. 1, 141--182. 

\bibitem[M-V]{mv}
R. Mart\'inez-Villa
{\em Applications of Koszul Algebras: The Preprojective Algebra}, in {\em Representation Theory of Algebras}, CMS Conference Proceedings 18, 1996, 487--504.


\bibitem[Os]{ostrik}  V.~Ostrik, 
   {\em Module categories, weak Hopf algebras and modular
    invariants},
 Transform. Groups {\bf 8} (2003), no. 2, 177--206.

\bibitem[Oc]{ocneanu} A.~Ocneanu,
  {\em Quantum subgroups, canonical bases and higher tensor structures},
  talk at the workshop {\em Tensor Categories in Mathematics and
  Physics},  Erwin Schr\"odinger Institute, Vienna, June 2004

\bibitem[PX1]{px}
Liangang Peng, Jie Xiao, {\em Root Categories and Simple Lie Algebras}, J. Algebra {\bf 198} (1997), no. 1, 19--56.

\bibitem[PX2]{px2}
L. Peng, J. Xiao, {\em Triangulated Categories and Kac-Moody algebras}, Invent. Math. {\bf 140} (2000), no.3, 563--603.

\bibitem[R1]{ringel}
C. Ringel, {\em Hall Algebras and Quantum Groups}, Invent. Math {\bf 101} (1990), 583--592.    
    
\bibitem[R2]{ringel2}
C. Ringel. {\em Hall Polynomials and Representation Finite Hereditary Algebras}, Advances in Mathematics {\bf 84} (1990), no. 2, 137--178.


\bibitem[Shi]{shi} J.-Y.~Shi,
  {\em The enumeration of Coxeter elements},
  J. Algebraic Combin. {\bf 6} (1997), no. 2, 161--171.

\bibitem[Sp]{springer} T.A. Springer, 
    {\em Regular elements of finite reflection groups}, Inv.  Math.,
     {\bf 25} (1974), 159--198.

\bibitem[Z]{zelikson} S.~Zelikson, 
    {\em Auslander-Reiten quivers and the Coxeter complex}, 
    Algebr. Represent. Theory {\bf 8} (2005),  35--55. 
\end{thebibliography}

\end{document}